\documentclass[11pt]{amsart}
\usepackage{amssymb, amsmath, amsthm}
\usepackage[latin1]{inputenc}
\usepackage{graphicx}
\usepackage[final]{hyperref}
\usepackage{color}
\usepackage{ulem}
\usepackage{epstopdf}
\usepackage{cancel}

\usepackage[a4paper, centering]{geometry}
\usepackage{color}
\usepackage{graphicx}
\usepackage{scalerel,stackengine}

\usepackage{mathtools}

\newtheorem{theorem}{Theorem}
\newtheorem{proposition}[theorem]{Proposition}
\newtheorem{examples}[theorem]{Examples}
\newtheorem{lemma}[theorem]{Lemma}
\newtheorem{corollary}[theorem]{Corollary}

\theoremstyle{definition}

\theoremstyle{remark}
\newtheorem{remark}[theorem]{Remark}

\parindent 0mm

\definecolor{verde}{RGB}{20,150,100}
\definecolor{purple}{RGB}{200,30,200}

\newcommand{\EEE}{\color{black}}

\stackMath
\newcommand\reallywidecheck[1]{%
\savestack{\tmpbox}{\stretchto{%
  \scaleto{%
    \scalerel*[\widthof{\ensuremath{#1}}]{\kern-.6pt\bigwedge\kern-.6pt}%
    {\rule[-\textheight/2]{1ex}{\textheight}}
  }{\textheight}%
}{0.5ex}}%
\stackon[1pt]{#1}{\scalebox{-1}{\tmpbox}}%
}

\def\R{\mathbb{R}}

\def\N{\mathbb{N}}

\newcommand{\sm}{\setminus}
\newcommand{\vps}{\varepsilon}
\newcommand{\Om}{\Omega}
\newcommand{\sq}{\subseteq}
\newcommand{\ra}{\rightarrow}
\newcommand{\reflext}{{\mathcal R} _t} 
\newcommand{\reflexT}{{\mathcal R} _T}

\def \e{\varepsilon}

\begin{document}
\title[]{ 
 Rigidity for measurable sets 
 }

\author[]{Dorin Bucur, Ilaria Fragal\`a}

\thanks{}

\address[Dorin Bucur]{
Universit\'e  Savoie Mont Blanc, Laboratoire de Math\'ematiques CNRS UMR 5127 \\
  Campus Scientifique \\
73376 Le-Bourget-Du-Lac (France)
}
\email{dorin.bucur@univ-savoie.fr}

\address[Ilaria Fragal\`a]{
Dipartimento di Matematica \\ Politecnico  di Milano \\
Piazza Leonardo da Vinci, 32 \\
20133 Milano (Italy)
}
\email{ilaria.fragala@polimi.it}

\keywords{   Rigidity results,  measurable sets, moving planes, Steiner symmetrization.  }
\subjclass[2010]{  53C24, 49Q15, 28A75, 26D10.  } 
\date{\today}


\maketitle

\begin{abstract}  
Let $\Omega \subset \R^d$, $d\geq 2$, be a set with finite Lebesgue measure such that,  for a fixed radius $r>0$, the Lebesgue measure of $\Omega \cap B _ r (x)$ is equal to a  positive   constant when $x$ varies in the essential boundary of $\Om$.  We prove that $\Omega$ is a ball (or a finite union of  equal  balls) provided it satisfies a nondegeneracy condition, which holds in particular for any set of diameter larger than $r$ which is either  open and connected, or of finite perimeter and indecomposable.  The proof   requires reinventing each step  of the moving planes method by Alexandrov in the framework of measurable sets. 
\end{abstract}



\section{Introduction}\label{sec:main}

In this paper we study the following rigidity problem for measurable sets of the Euclidean space $\R ^d$: 
for a given radius $r>0$,  identify measurable sets $\Omega$ of  finite Lebesgue measure such that,
 for a positive constant $c$,  
\begin{equation}\label{f:hyp3} 
|\Omega \cap B _ r (x) | = c \qquad \forall x \in \partial^* \Omega\,,
\end{equation}
where $B _ r (x)$ is the ball of radius $r$ centred at $x$,  $| \cdot |$ denotes the Lebesgue measure, and  $\partial ^* \Omega$ indicates the essential boundary of $\Omega$ ({\it i.e.}\ the set of  points $x\in \R^d$  at which  both $\Omega$ and its complement $\Omega ^ c$ have a strictly positive $d$-dimensional upper density).

Such an easy-to-state geometric question conceals several relations with classical questions in Differential Geometry, 
as well as with recent advances in Convex Geometry and Geometric Measure Theory. 
We outline them below, before stating the results.

\bigskip 
{\bf A quick historical overview.} The question about rigidity criteria  obtainable
by measuring intersections of a domain $\Omega$ in $\R^ d$ with balls rolling along its boundary dates back to almost one century ago. 
The idea is to look at the behaviour, for $x \in \partial \Omega$,  of 
 surface integrals 
 ${ \mathcal H ^ { d-1}(  \Omega \cap \partial B _ r (x)) }$, or of volume integrals 
 $|\Omega \cap B _ r (x)|$. 
In its first grain,  
this idea can be found in a paper dating back to 1932 by Cimmino (see \cite{Ci32}),  
where he raised the following question:   is it possible to characterize 
 surfaces $\Gamma$ which bisect the $\mathcal H ^ 2$-measure 
 of the boundary of any ball which is centred on $\Gamma$ and has a sufficiently small radius?     
Cimmino's problem was solved more than sixty years later by Nitsche, who proved that the only (smooth)  surfaces with this property are the plane and the helicoid \cite{nitsche}. 

The problem reemerged in the first  2000s under different garments:  motivated by the study of 
isothermic surfaces in heat conduction, also in relation with the so-called {\it Matzoh ball soup} problem  \cite{MS},
 Magnanini, Prajapat and Sakaguchi were led to consider {\it $B$-dense domains}, namely subsets $\Omega$ in $\R^d$ such that,
 for any  $r>0$, there exists a positive constant $c(r)$ such that 
   $|\Omega \cap B _ r (x)| = c (r)$ for every $x \in \partial \Omega$.  In particular they proved that, if the boundary of a $B$-dense domain $\Omega$ is a complete embedded minimal surface of finite total curvature in $\R^3$, $\Omega$ must be a plane \cite{MPS06}.  
  Later in 2016,  Magnanini and Marini proved that, in any space dimension and for any given convex body $K$, if a set $\Omega $ of positive and finite Lebesgue measure is $K$-dense (meaning that $|\Omega \cap (x+ rK)| = c (r)$ for every $x \in \partial \Omega$), then $\Omega $ and $K$ are homothetic ellipsoids  \cite{MM16} (see also \cite{MM14, ABG}).  Note that, as long as it is assumed to hold for any sufficiently small $r>0$, the constancy of the volume measure 
$|\Omega \cap B _ r (x)|$ is actually equivalent to that of the surface measure  ${ \mathcal H ^ { d-1}(  \Omega \cap \partial B _ r (x)) }$; so the difference between $B$-dense domains and those considered by Cimmino is just that the volume fraction is no longer fixed to $\frac{1}{2}$ (in other words, the constant $c(r)$  may differ from $\frac{1}{2} |B _ r (x)|$).


All the rigidity results mentioned so far
are naturally related to 
a central question in Differential Geometry, namely the classification of hypersurfaces with constant mean curvature in $\R ^ d$.  Indeed, 
in view of the asymptotic expansion  
\begin{equation}\label{f:ae} | \Omega\cap B _ r (x)|= \frac{1}{2}\omega_d  r^ d - \frac {d-1}{2(d+1)}\omega_{d-1} H_\Omega (x) r^{d+1}+ O (r^{d+2}),
\end{equation} 
where $\omega_d$  is the volume of the unit ball in $\R^d$ and $H _\Omega$ is the mean curvature of $\partial \Omega$  \cite{HT03},  
the Lebesgue measure of $\Omega \cap B _ r (x)$ can be interpreted as an integral approximation of $H _\Omega(x)$;
differentiating with respect to $r$, the same assertion is valid for the $(d-1)$-dimensional measure of $\Omega \cap \partial B _ r (x)$. 

In this perspective, the rigidity criteria quoted above can be read as counterparts of some cornerstone results for hypersurfaces with constant mean curvature. 
Thus, Magnanini-Marini criterion reminds the celebrated theorem proved in 1958  by Alexandrov \cite{ale}:
if $\Omega$ is a bounded connected domain of class $\mathcal C ^ 2$ such that $\partial \Omega$ has  constant mean curvature, then $\Omega$ is a ball (a generalization has been proved very recently by Delgadino-Maggi \cite{delmag}, showing that any set with finite Lebesgue measure and finite perimeter with constant distributional mean curvature is a finite union of equal balls).   
Likewise, Nitsche criterion, though not involving any topological constraint, 
reminds the much harder problem, settled only in 2005 by Meeks-Rosenberg  \cite{meeks}, 
of classifying the plane and the helicoid as the unique simply connected  minimal surfaces embedded in $\R ^ 3$.  

Comparing the constancy of the mean curvature
with the constancy of one of the measures ${ \mathcal H ^ { d-1}(  \Omega \cap \partial B _ r (x)) }$  and $|\Omega \cap B _ r (x)|$  for any sufficiently small $r>0$, 
it is clear that the former is in principle weaker, as it concerns only {\it one}  among the coefficients of the expansion \eqref{f:ae} in powers of $r$; 
on the other hand, defining a notion of mean curvature requires some boundary regularity, even if done in distributional sense, 
while measuring intersections with balls requires no smoothness at all. 

All in all, at present no rigidity result seems to be available for arbitrary measurable sets under a fairly weak condition such as the constancy of a single and well-defined quantity. 
Aim of this paper is to provide a first contribution in this direction, by considering sets which satisfy condition \eqref{f:hyp3}.  The fact that we work with {\it one fixed radius} makes the approach completely new. We call sets satisfying \eqref{f:hyp3} {\it $r$-critical}.  
Note that, since the essential boundary $\partial ^*\Omega$ is included in the topological boundary $\partial \Omega$, condition \eqref{f:hyp3} is weaker than the  constancy of $|\Omega \cap B _ r (x)|$ along $\partial \Omega$. 
Incidentally let us also mention that, at least for convex domains, $r$-criticality  can be rephrased by saying that $\partial \Omega$ is a level surface
for the {\it cross-covariogram function} of $\Omega$ and $B _ r (0)$. (The cross-covariogram function of two convex bodies  $K_1$ and $K_2$ is  defined as  
 $g_{(K_1, K_2)} (x) := |K_1 \cap ( x +K_2)|$,  and the investigation of its level lines has attracted some attention in the literature on Convex Geometry, also in connection with  the floating body problem, see for instance \cite{bianchi, MRS93}). 

The reason for the terminology ``$r$-critical''' is that, notably,  this notion has still a variational interpretation.   
Actually, sets of constant mean curvature may be viewed as stationary sets for the perimeter functional under a volume preserving perturbation. From this point of view, Alexandrov result, along with its extension in \cite{delmag}, allows to identify critical sets for the isoperimetric inequality proved in 1958 by De Giorgi \cite{degiorgi, fusco}. 

An interpretation in the same vein can be given to $r$-critical sets, as soon as the isoperimetric inequality is replaced by another classical one, which is even more ancient, 
namely the {\it rearrangement inequality} proved in 1932 by Riesz  \cite{riesz}. In a simplified version  it states that, for any 
 radially symmetric, decreasing, non negative function $h$, 
balls maximize, under a constraint of prescribed Lebesgue measure,   the integral functional 
$$
 J_ h  (\Omega) := \int _\Omega \int _{\Omega} h(x-  y)  \, dx \, dy.
$$
Given an integrand $h$ as above,  it is not difficult to check that
balls maximize $J _h$ provided  they maximize  $J_ {\chi _{B _ r (0)}} $  for all $r>0$. 
Hence  the choice $h = \chi _{B _ r (0)}$ is of special relevance, and for such kernel
 stationary domains  are precisely sets satisfying condition \eqref{f:hyp3}. 
 
 Equivalently, in view of the equality  $|\Omega\cap B _ r (x)|-|\Omega^c\cap B _ r (x)| = 2 |\Omega\cap B _ r (x)|-\omega_d r^d  $, 
$r$-critical sets may be viewed as stationary domains,  under volume preserving perturbations, for the  {\it nonlocal perimeter}
$$r{\text -{\rm Per}} (\Omega):= \int _\Omega \int _{\Omega^c} \chi  _{ \{|x-y|< r\} }  \, dx \, dy\,;$$ 
in fact, the quantity $|\Omega^c \cap B _ r (x)|-|\Omega\cap B _ r (x)|$ fits 
 the definition  of {\it generalized nonlocal mean curvature} proposed by 
Chambolle, Morini and Ponsiglione in \cite{CMP15} (see also \cite{MRT}). 

This nonlocal interpretation brings immediately to mind the fractional perimeter introduced in the seminal papers \cite{CS08, CRS10},
$$P_s(\Omega)= \int _\Omega \int _{\Omega^c}\frac{1}{ |x-y|^{d+2s} } \, dx \, dy\, , \qquad s \in \big (0, \frac 12 \big )\,,$$
and particularly two independent results appeared in  2018 by  Ciraolo-Figalli-Maggi-Novaga  \cite{CFMN18} and by Cabr\'e-Fall-Morales-Weth   \cite{CMMW}, which have identified balls as the unique stationary domains of class $\mathcal C^{1, \alpha}$ ($\alpha > 2s$) for the perimeter $P _ s$. 
 
The qualitative properties of the kernel $\chi _ { B _ r(0)}$  make our problem dramatically different:  it is bounded
(allowing to deal with arbitrary measurable sets), compactly supported (producing short-range nonlocal effects),   and
 discontinuous with level sets of positive measure  (enhancing the need for some ``transmission'' issue, companion to $r$-criticality, in order to get rigidity).  
Fatally, notwithstanding the differences between the two questions, the investigation of $r$-critical measurable sets evokes Pompeiu problem. 

Stationary domains for more general kernels $h$ could be considered, see Remark \ref{rem:h}, but for the sake of clearness we prefer to 
 focus on the choice $h =\chi _ { B _ r(0)}$,  since it allows to capture all new relevant ideas.  
  
To conclude, our way  to rigidity appears to be very suitable to deal with from an applied point of view. 
As a matter of fact, in recent times the Lebesgue measure $|\Omega \cap B _ r (x)|$ has been successfully exploited
 in  Geometry Processing, under the name of {\it volumetric integral invariant},
 for an efficient computation avoiding noise of shape characteristics  (including the mean curvature), with applications to feature extraction at multiple scales and automatic rassembling of fragments of broken objects (see \cite{MCHYS, PWHY}).

\bigskip
{\bf The results.} Before stating our main result, in order to introduce  the key condition for rigidity, companion to  $r$-criticality,  we set the following definition:  
%
%
we say that a measurable set $\Omega$ in $\R^d$ 
is {\it $r$-degenerate} if
$$\inf _{x_1, x_2 \in \partial ^* \Omega} \frac{\big |\Omega \cap ( B _ r (x_1) \Delta B _ r (x_2)) \big |  }{\| x_1 -x_2 \| }  = 0\,. 
$$
  A discussion about this  notion  is postponed to Section \ref{sec:fat}.  Therein we shall provide, in particular, a measure theoretic condition sufficient for nondegeneracy,  which permits to show that bounded open connected sets, as well as bounded indecomposable sets with finite perimeter, are not degenerate for any $r$ smaller than their diameter.


\begin{theorem} \label{t:serrin3}
 Let $\Omega$ be a measurable set with finite Lebesgue measure in $\R^d$, and let $r>0$.  Assume that $\Omega$ is $r$-critical and  not $r$-degenerate.   
Then $\Omega$ is equivalent to  a finite union of balls of the same radius $R> \frac{r}{2}$, at mutual distance  larger than or equal to $r$. 
\end{theorem}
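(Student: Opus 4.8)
The plan is to adapt Alexandrov's moving planes method to the measurable setting, replacing every pointwise/differential-geometric argument with an integral one built around the functional $x \mapsto |\Omega \cap B_r(x)|$ and the $r$-criticality condition. First I would fix a direction, say $e_d$, and for $\lambda$ large consider the half-space $H_\lambda = \{x_d > \lambda\}$ together with the reflection $\mathcal{R}_\lambda$ across $\partial H_\lambda$. Define the reflected cap $\Omega_\lambda = \mathcal{R}_\lambda(\Omega \cap H_\lambda)$ and start decreasing $\lambda$ from $+\infty$. Since $\Omega$ has finite measure it is (essentially) bounded in the $e_d$-direction on at least a set of positive measure near the top, so there is a first critical value $\lambda^*$ at which either (a) $\Omega_{\lambda^*}$ becomes internally tangent to $\partial^*\Omega$ from inside $\Omega \cap \{x_d < \lambda^*\}$ in a measure-theoretic sense, or (b) $\partial H_{\lambda^*}$ becomes orthogonal to $\partial^*\Omega$ at some boundary point. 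The whole difficulty is to make these two alternatives precise \emph{without} any regularity, purely in terms of densities and of the volume function $|\Omega \cap B_r(\cdot)|$.

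The core of the argument is a measure-theoretic comparison/maximum principle. Set $u(x) := |\Omega \cap B_r(x)|$; this is a Lipschitz function on $\R^d$ (its modulus of continuity is controlled by $|\Omega \cap (B_r(x_1)\Delta B_r(x_2))|$, which is exactly the quantity appearing in the definition of $r$-degeneracy). On $\partial^*\Omega$ we have $u \equiv c$ by $r$-criticality. The plan is to show that if the reflected cap $\Omega_{\lambda}$ were \emph{strictly} contained (up to null sets) in $\Omega \cap \{x_d<\lambda\}$ with the containment not yet saturated, one could push $\lambda$ further; and that at $\lambda^*$ the two alternatives above force, via comparison of $u$ on the reflected configuration, the equality $\mathcal{R}_{\lambda^*}(\Omega \cap H_{\lambda^*}) = \Omega \cap \{x_d < \lambda^*\}$ modulo a null set — i.e.\ $\Omega$ is symmetric with respect to the hyperplane $\partial H_{\lambda^*}$. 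Here $r$-\emph{nondegeneracy} is exactly what rules out a pathological drift of the tangency point to infinity or a collapse of the contact set to measure zero: it guarantees a quantitative gain $|\Omega \cap (B_r(x_1)\Delta B_r(x_2))| \geq \omega \|x_1-x_2\|$ that transfers the equality of $u$-values at contact into equality of the sets. The honest "transmission" step — showing that local measure-theoretic tangency of the reflected cap propagates to a global reflection symmetry — is where the new ideas must go, since the discontinuity of $\chi_{B_r(0)}$ with level sets of positive measure means a naive Hopf-lemma argument is unavailable; one has to exploit the short range $r$ of the kernel together with a connectedness-type consequence of nondegeneracy.

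Once hyperplane symmetry in \emph{every} direction is established, the set $\Omega$ is (equivalent to) a union of balls: symmetry in all directions through a common point would immediately give a single ball, but because the kernel has compact support $r$, the moving plane procedure only synchronizes connected pieces that are within range $r$ of each other. So the conclusion is that each "$r$-connected component" of $\Omega$ is a ball, all balls have a common radius $R$ (forced by the single constant $c$, since $|B_R \cap B_r(x)|$ for $x \in \partial B_R$ determines $R$ once $R > r/2$), and two distinct balls must be at distance $\geq r$, for otherwise the moving planes across the two would conflict or the $r$-criticality constant would differ on the two overlapping boundary regions. The bound $R > r/2$ comes from noting that $|\Omega \cap B_r(x)| = c > 0$ on $\partial^*\Omega$ forces each ball to meet every boundary ball in positive measure while the configuration be consistent; a short explicit computation with $|B_R \cap B_r(x)|$, $x \in \partial B_R$, pins down $R$ and shows $R \leq r/2$ is incompatible with the contact structure.

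The main obstacle I expect is precisely the replacement of Alexandrov's tangency/Hopf-lemma step: proving that a measure-theoretic internal tangency of the reflected cap, under $r$-criticality and $r$-nondegeneracy, forces the reflected cap to coincide with the opposite cap up to a null set. In the smooth case this is the strong maximum principle for the (elliptic) mean-curvature operator; here there is no operator, only the nonlocal, non-smooth functional $u$, so one must invent a substitute — presumably a contradiction argument: if the two caps differ on a positive-measure set near the contact, build two boundary points $x_1, x_2$ on opposite sides with $u(x_1) = u(x_2) = c$ but with $|\Omega \cap (B_r(x_1)\Delta B_r(x_2))| > 0$ forced to be simultaneously $0$ and positive, the positivity coming from nondegeneracy and the vanishing from the reflection comparison. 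Getting this bookkeeping right, and handling the fact that the critical $\lambda^*$ may a priori be hit by alternative (b) (orthogonality) rather than (a), is the delicate heart of the proof.
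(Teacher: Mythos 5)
Your outline identifies the right strategy (moving planes for measurable sets, with the Lipschitz function $u(x)=|\Omega\cap B_r(x)|$ and nondegeneracy as the quantitative substitute for ellipticity), but the two steps you yourself flag as ``where the new ideas must go'' are exactly the content of the proof, and in addition one of your intermediate claims is false as stated. You assert that at the critical value $\lambda^*$ the comparison forces $\mathcal R_{\lambda^*}(\Omega\cap H_{\lambda^*})=\Omega\cap\{x_d<\lambda^*\}$ up to a null set, i.e.\ that the \emph{whole} of $\Omega$ becomes symmetric about the stopping hyperplane. For a disconnected $\Omega$ this is not true and cannot be proved: the contact at the stopping time only constrains the part of $\Omega$ reached by the reflected cap, since the kernel $\chi_{B_r(0)}$ propagates information only over distance $r$. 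The paper instead proves a local statement at an away-contact point $p'$ (with mirror point $p$): combining $u(p')=u(p)$ with the reflection symmetry of $\Omega_T\cup\mathcal R_T$ gives the exact cancellation $\big|[B_r(p')\setminus B_r(p)]\cap(\Omega\setminus\mathcal R_T)\big|=0$; nondegeneracy then produces a new essential-boundary point $y'\in B_r(p')\setminus\overline{B_r(p)}$ which is again an away-contact point, so the contact set is open and one obtains a decomposition $\Omega=\Omega^s\sqcup\Omega^{ns}$ into an open Steiner-symmetric part and a remainder whose essential boundaries (closures) meet only on the stopping hyperplane. This is the actual Hopf-type transmission step, and it is absent from your proposal.

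Second, even granting symmetry of the relevant pieces in every direction, your conclusion that ``symmetry in all directions through a common point would immediately give a single ball'' begs the question for a disconnected set: the symmetry hyperplanes obtained for different directions need not be concurrent, and different components may be symmetrized by different hyperplanes. The paper needs a further argument (countability of the components produced over all directions, the construction of a component Steiner-symmetric about $d$ hyperplanes with linearly independent normals, and the lemma that such a set is connected and contains its centre of mass) to show that every component arising in the decomposition is $r$-isolated; only then can each isolated component be shown to inherit $r$-criticality and nondegeneracy, be restarted as a set in its own right, and be identified as a ball via convergence of Steiner symmetrizations. Finally, your alternative (b) at the stopping time (the ``orthogonal''/close-contact case) is not dispatched by a remark: the paper rules out close contact without away contact through an explicit two-sided estimate of $|\Omega\cap B_r(q_{1,n})|-|\Omega\cap B_r(q_{2,n})|$, with a cylinder-difference upper bound of order $\delta^2\gamma_n$ played against a lower bound of order $\gamma_n$ coming from nondegeneracy and the absence of away contact. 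None of these three pieces is present, so what you have is an accurate road map rather than a proof.
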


\begin{remark} \label{r:necessary}
Rigidity may fail if the finite measure assumption is dropped:
any  halfspace or any   strip 
 $\{ x \in \R^d : a<x_1<b\}$ is   critical and not degenerate for any $r >0$. 
  As well, rigidity may fail  for $r$-critical sets of finite measure which are   $r$-degenerate; 
  for some examples in this respect, see Section \ref{sec:fat}.
 \end{remark}

 \begin{remark} A result analogue to Theorem \ref{t:serrin3} can be immediately deduced, by using the area formula, if balls are replaced by ellipsoids: 
if $E$ is a given ellipsoid,  any set with finite measure which satisfies the criticality and  degeneracy   conditions with $x+ E$ in place of $B _ r (x)$,  is a finite union of ellipsoids homotetic to $E$.  
\end{remark}

\begin{remark}\label{r:fractional} We point out that the initial choice of the radius $r$ produces a sort of  {\it tuned bubbling phenomenon}, which may occur only with 
a precise  lower threshold both on the size of the balls and on their mutual distance. In accordance with the short-range nonlocal nature of our kernel, this 
behaviour should be compared with the local result in \cite{delmag}, where bubbling can occur at any scale, and the fractional results in \cite{CFMN18, CMMW}, where bubbling cannot occur at all. Let us also mention that a further motivation for characterising finite unions of equal balls is their appearance  as optimal domains  in spectral shape optimization problems (see \cite{H06, BH19}). 
  \end{remark}

\smallskip 
We now present 
some consequences of Theorem \ref{t:serrin3} for sets enjoing some kind of regularity.  
We begin by the case of open sets: 

\begin{corollary}\label{t:serrin1}
Let $\Omega$ be an open set  with finite Lebesgue measure in $\R^d$, and let $r>0$. 
Assume   that there exists a positive constant $c$ such that
\begin{equation}\label{f:hyp1} 
|\Omega \cap B _ r (x) | = c \qquad \forall x \in \partial \Omega\,, 
\end{equation}
where $\partial \Omega$ denotes the topological boundary. 

If $r < \inf_i \{ {\rm diam} (\Omega _i ) \}$, where $\Omega _i$ are the open connected components of $\Omega$, then  
$\Omega$ is a finite union of balls of the same radius $R> \frac{r}{2}$, at mutual distance  larger than or equal to $r$.  
In particular, if $\Omega$ is connected and $r < {\rm diam} (\Omega)$, then $\Omega$ is a ball. 
\end{corollary}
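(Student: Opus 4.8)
The plan is to deduce this statement from Theorem~\ref{t:serrin3}: accordingly, I would check that $\Omega$ is $r$-critical and not $r$-degenerate, and then upgrade the ``up to a null set'' conclusion of the theorem to a genuine equality of open sets.

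First I would record that $\Omega$ is bounded and $r$-critical. One may assume $\Omega\neq\emptyset$, and since $|\Omega|<\infty$ one has $\Omega\neq\R^d$, so $\partial\Omega\neq\emptyset$; for $x\in\partial\Omega$ the set $\Omega\cap B_r(x)$ is open and nonempty, hence $c=|\Omega\cap B_r(x)|>0$. If $\partial\Omega$ were unbounded, choose $x_n\in\partial\Omega$ with $\|x_n\|\to\infty$: for every $M>0$, taking $n$ with $\|x_n\|>M+r$ gives $B_r(x_n)\subseteq\R^d\setminus B_M(0)$, whence $c=|\Omega\cap B_r(x_n)|\le|\Omega\setminus B_M(0)|\to 0$ as $M\to\infty$, a contradiction. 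So $\partial\Omega\subseteq\overline{B_M(0)}$ for some $M$; then $\R^d\setminus\overline{B_M(0)}$ is connected and disjoint from $\partial\Omega$, hence contained in $\Omega$ or in the interior of $\Omega^c$, and the first case is ruled out by $|\Omega|<\infty$, so $\Omega\subseteq\overline{B_M(0)}$ is bounded. Finally, since $\partial^*\Omega\subseteq\partial\Omega$, hypothesis~\eqref{f:hyp1} implies~\eqref{f:hyp3}, i.e.\ $\Omega$ is $r$-critical with the positive constant $c$.

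Next I would verify that $\Omega$ is not $r$-degenerate and conclude. By the previous step $\Omega$ is a bounded open set all of whose connected components $\Omega_i$ have diameter strictly larger than $r$, which is exactly the situation covered by the nondegeneracy criterion of Section~\ref{sec:fat}: the measure-theoretic sufficient condition proved there yields a uniform positive lower bound for $|\Omega\cap(B_r(x_1)\Delta B_r(x_2))|/\|x_1-x_2\|$ as $x_1,x_2$ range over $\partial^*\Omega$, also when they lie on different components (boundedness of $\Omega$ keeping $\|x_1-x_2\|$ under control). Hence $\Omega$ is not $r$-degenerate, and Theorem~\ref{t:serrin3} gives that $\Omega$ coincides, up to a Lebesgue-null set, with a finite union $U=\bigcup_{k=1}^N B_R(p_k)$ of balls of common radius $R>\frac r2$ with $\mathrm{dist}\big(B_R(p_k),B_R(p_l)\big)\ge r$ for $k\neq l$. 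Since $\Omega$ and $U$ are open and $|\Omega\,\Delta\,U|=0$, the open set $\Omega\,\Delta\,U$ is empty, so $\Omega=U$; the closed balls $\overline{B_R(p_k)}$ being pairwise disjoint (at positive mutual distance), each $B_R(p_k)$ is open and closed in $U$, hence a connected component of $\Omega$, so the $\Omega_i$ are precisely these balls, consistently with $2R=\mathrm{diam}(\Omega_i)>r$. In particular, if $\Omega$ is connected then $N=1$ and $\Omega=B_R(p_1)$ is a ball.

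I expect the only genuinely delicate point to be the nondegeneracy step, namely checking that the mere ``thickness'' assumption $r<\inf_i\mathrm{diam}(\Omega_i)$ does activate the sufficient condition for nondegeneracy of Section~\ref{sec:fat} --- in particular for pairs of boundary points lying on distinct, possibly almost tangent, components of $\Omega$; everything else is routine bookkeeping once Theorem~\ref{t:serrin3} is granted.
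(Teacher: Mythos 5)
Your overall route is exactly the intended one: boundedness from finite measure plus $r$-criticality, the inclusion $\partial^*\Omega\subseteq\partial\Omega$ to pass from \eqref{f:hyp1} to \eqref{f:hyp3}, Proposition~\ref{l:fat} (via Proposition~\ref{l:new}) for nondegeneracy under the hypothesis $r<\inf_i\{\mathrm{diam}(\Omega_i)\}$, and then Theorem~\ref{t:serrin3}. The point you flag as delicate is in fact already settled by Proposition~\ref{l:fat}, so no extra work is needed there.

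The genuine gap is in your last step. You write that ``the open set $\Omega\,\Delta\,U$ is empty, so $\Omega=U$'', but $\Omega\,\Delta\,U$ is \emph{not} open ($\Omega\setminus U$ is the intersection of an open set with a closed one), and, more seriously, two Lebesgue-equivalent open sets need not coincide: $B_1(0)$ and $B_1(0)\setminus\{0\}$ are both open and differ by a null set. So equivalence of $\Omega$ with $U=\bigcup_k B_R(p_k)$ does not by itself give $\Omega=U$, and you must use the hypothesis on the \emph{topological} boundary to exclude such pathologies. A correct argument runs as follows. Since $\Omega$ is open, every point of $\Omega$ has density $1$, hence $\Omega\subseteq\Omega^{(1)}=U^{(1)}=U$ (the last equality because $U$ is a finite union of open balls at positive mutual distance). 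Conversely, every $x\in U=\Omega^{(1)}$ lies in $\overline\Omega=\Omega\cup\partial\Omega$; if $x\in\partial\Omega$, then \eqref{f:hyp1} gives $|\Omega\cap B_r(x)|=c$, whereas $x$ lies in some open ball $B_R(p_k)$, so $\|x-p_k\|<R$ and
$$|\Omega\cap B_r(x)|=|U\cap B_r(x)|=|B_R(p_k)\cap B_r(x)|>|B_R(p_k)\cap B_r(y)|=c$$
for any $y\in\partial B_R(p_k)\subseteq\partial^*\Omega\subseteq\partial\Omega$ (here one uses $r<2R$, the strict monotonicity of $t\mapsto|B_R(p_k)\cap B_r(p_k+t e)|$, and the fact that the other balls are at distance at least $r$ from $B_R(p_k)$, so they do not meet $B_r(x)$). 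This contradiction shows $U\cap\partial\Omega=\emptyset$, hence $U\subseteq\Omega$ and finally $\Omega=U$; the rest of your bookkeeping on connected components then goes through.
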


Next we turn to the case of sets with finite perimeter. Recall that, 
 following \cite{ACMM01},  any set $\Omega$ with finite perimeter can be written as finite or countable family of  {\it indecomposable components} $\Omega _i$. This means that  each $\Omega_i$ is indecomposable in the sense that it does not admit a partition $(\Omega _i ^+, \Omega _i ^-)$, with  $|\Omega _i ^ \pm| >0$ and 
${\rm Per } (\Omega_i)= {\rm Per } (\Omega_i^+) + {\rm Per } (\Omega _ i ^-)$, and that the $\Omega _i$'s are maximal indecomposable sets. 
Recall also that  the {\it reduced boundary} $\mathcal F \Omega$   is  the collection of points $x \in {\rm supp} ( D \chi _ \Omega)$ such that the generalized  normal 
$\nu _\Omega (x):= \lim  _{\rho \to 0} { D \chi _{\Omega}   (B _\rho (x)) } / { |D \chi _{\Omega} |  (B _\rho (x) )}$
 exists in $\R^d$ and satisfies $\nu _\Omega (x) = 1$. 
For sets of finite perimeter, a sufficient condition for $r$-criticality is the validity of condition 
\eqref{f:hyp2} below, because the closure of
$\mathcal F \Omega \setminus N$, for any  $\mathcal H ^ { d-1}$-negligible set $N$, turns out to contain $\partial ^ * \Omega$. 
 
 \begin{corollary}\label{t:serrin2}
 Let $\Omega$ be a  set of finite perimeter and  finite Lebesgue measure in $\R^d$, and let $r>0$.  Assume there exists a positive constant $c$ such that
\begin{equation}\label{f:hyp2} 
|\Omega \cap B _ r (x) | = c \qquad  \text{for $\mathcal H ^ { d-1}$-a.e.}\  x \in \mathcal F \Omega\,.
\end{equation}

If  $r < \inf_i \{ {\rm diam} (\Omega _i ) \}$, where $\Omega _i$ are the indecomposable components of $\Omega$, then  
$\Omega$ is equivalent to  a finite union of balls of the same radius $R> \frac{r}{2}$, at mutual distance  larger than or equal to $r$.  
In particular, if $\Omega$ is indecomposable and $r < {\rm diam} (\Omega)$, $\Omega$ is a ball. 
\end{corollary}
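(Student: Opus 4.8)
The plan is to derive the statement from Theorem~\ref{t:serrin3}, so the whole task reduces to showing that hypothesis \eqref{f:hyp2} forces $\Omega$ to be $r$-critical and not $r$-degenerate, and then to reading off the conclusion in the indecomposable case. First I would record that the function $u(x):=|\Omega\cap B_r(x)|$ is Lipschitz continuous on $\R^d$, since $|u(x)-u(y)|\le|B_r(x)\,\Delta\,B_r(y)|\le\mathcal H^{d-1}(\partial B_r)\,\|x-y\|$. By \eqref{f:hyp2} there is an $\mathcal H^{d-1}$-negligible set $N\subseteq\mathcal F\Omega$ with $u\equiv c$ on $\mathcal F\Omega\setminus N$; since, as recalled just before the corollary, $\partial^*\Omega\subseteq\overline{\mathcal F\Omega\setminus N}$, continuity of $u$ upgrades this to $u\equiv c$ on $\partial^*\Omega$, that is, \eqref{f:hyp3} holds and $\Omega$ is $r$-critical.

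Next I would check non-$r$-degeneracy. Writing $\Omega=\bigcup_i\Omega_i$ for its (at most countable) family of indecomposable components in the sense of \cite{ACMM01}, the hypothesis $r<\inf_i{\rm diam}(\Omega_i)$ gives ${\rm diam}(\Omega_i)\ge r+\delta$ for all $i$ and some $\delta>0$. Each $\Omega_i$ is indecomposable of diameter larger than $r$, hence satisfies the measure-theoretic sufficient condition for non-degeneracy proved in Section~\ref{sec:fat}; the uniform lower bound $r+\delta$ on the diameters should allow the constant in that condition to be taken independent of $i$, and since the condition is a quantitative local requirement along the essential boundary (with $\mathcal F\Omega=\bigcup_i\mathcal F\Omega_i$ up to $\mathcal H^{d-1}$-null sets) it would transfer to $\Omega$. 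The point needing care is when $x_1,x_2\in\partial^*\Omega$ sit in the closures of two distinct components: for $\|x_1-x_2\|$ small this is ruled out (or collapses to a single component) by the fatness of the nearby component, while for $\|x_1-x_2\|$ bounded below the degeneracy quotient is estimated directly using the fatness around $x_1$.

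With $r$-criticality and non-$r$-degeneracy in hand, Theorem~\ref{t:serrin3} gives at once that $\Omega$ is equivalent to a finite union of balls of a common radius $R>\frac{r}{2}$ at mutual distance $\ge r$, which is the assertion. For the indecomposable case I would observe that such a union with two or more balls is decomposable --- when the mutual distance is $\ge r>0$ the closed balls are pairwise disjoint, so the perimeter is additive and each ball has positive measure --- hence an indecomposable $\Omega$ must be (equivalent to) a single ball, and the constraint $r<{\rm diam}(\Omega)=2R$ is then automatic. The main obstacle is the middle step: turning the diameter hypothesis on the (possibly infinitely many) indecomposable pieces into a uniform quantitative non-degeneracy estimate for the whole set, which leans on the structure theory of \cite{ACMM01} together with the estimate of Section~\ref{sec:fat} and some uniformity bookkeeping over the components; Steps~1 and~3 are routine once that is available.
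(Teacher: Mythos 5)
Your proposal is correct and follows essentially the same route as the paper: $r$-criticality comes from the continuity of $x\mapsto|\Omega\cap B_r(x)|$ together with the inclusion $\partial^*\Omega\subseteq\overline{\mathcal F\Omega\setminus N}$ noted just before the corollary, non-degeneracy from the results of Section~\ref{sec:fat}, and the conclusion from Theorem~\ref{t:serrin3}. The ``main obstacle'' you flag is exactly the content of Proposition~\ref{l:fat}, which can be cited directly: its proof reduces to a single indecomposable component via Proposition~\ref{l:new}, so no further uniformity bookkeeping over the components is required.
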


 \bigskip  
{\bf About the proof of Theorem \ref{t:serrin3}.}  Alexandrov rigidity theorem was obtained by a very elegant proof, based on what he called {\it reflection principle}, nowadays commonly known as {\it the moving planes method}. His  brilliant idea was destined to have a tremendous impact also in the field of Mathematical Analysis: its
implications in PDEs were firstly enhanced in  the seventies by Serrin \cite{serrin} to get rigidity results  for overdetermined boundary value problems,    and afterwards  enlivened  to get symmetry and monotonicity properties of solutions to nonlinear elliptic equations, in particular by  Gidas-Ni-Nirenberg \cite{GNN79}, 
Berestycki-Nirenberg \cite{BN88, BN91},  Caffarelli-Gidas-Spruck \cite{CGS89}.

 The proof of Theorem \ref{t:serrin3} is based on a  reinvention  of the moving planes method in the context of measurable sets.  Alexandrov idea is that, if $\Omega$ has constant mean curvature, it must have a hyperplane of symmetry in every direction; this can be obtained starting from an arbitrary hyperplane,  moving it in a parallel way until an appropriate stopping time, and reflecting $\Omega$ about such hyperplane.  
The conclusion is then reached by the qualitative behaviour of the constant mean curvature equation, 
specifically using the strong maximum principle and Hopf  boundary point lemma. 
Our situation is completely different,  for many reasons. 
First, no connectedness assumption is made on $\Omega$, so that the proof cannot be obtained just by observing that the choice of the initial hyperplane is arbitrary,  but requires a new argument allowing to single out each ball and ``extract'' it from $\Omega$ once enough symmetries are detected. 
Second,  no smoothness information is available:
since the essential boundary does not admit a normal vector and is not locally a graph, 
all the   
the steps of the method loose their meaning. 
Third, even in cases when the boundary is locally a graph   and the contact with the reflected cap holds in classical sense,   
 no PDE holds around the contact point, but merely the $r$-criticality condition on the essential boundary. 
Thus we need to 
conceive new arguments, in particular to decipher why the movement can start and especially when it has to stop.   
In the smooth setting, this occurs either when the boundary and the reflected cap become tangent,  
or when the boundary and the moving plane meet orthogonally;
in the measurable setting,  these two situations
must be abandoned in favour of suitable notions of {\it away} or {\it close} contact. They are defined and handled
relying on the concept of Steiner-symmetric sets, which plays a crucial role similarly as in De Giorgi's proof of the isoperimetric theorem. 
A more detailed outline of the proof is given at the beginning of Section \ref{sec:proof}.


\bigskip

\section{Preliminaries}

In this section we discuss the main issues about  degeneracy, and we prepare the proof of Theorem \ref{t:serrin3}, by analyzing the structure of certain Steiner symmetric sets obtained by reflection.

\subsection{About   $r$-degeneracy} \label{sec:fat}

We start by showing some counterexamples of $r$-critical sets which escape from rigidity since they are degenerate.

\begin{examples}{\rm  Different kinds of bounded sets which, for some $r>0$, are critical but  degenerate:

\smallskip
{\it (i)  Small sets:} any measurable set $\Om$ with ${\rm diam } (\Om) \le r$.
  
 \smallskip
{\it  (ii) Unions of  small sets at large mutual distance:}   any measurable set  obtained as  the union of  a finite number of measurable sets $\Omega_j$, having  the same measure, 
${\rm diam } (\Om_j) \le r $ $\forall j$, and ${\rm dist} (\Omega _j, \Omega _ l) \geq r$ $\forall j \neq l$. 
  
 \smallskip 
 {\it (iii)  Unions of  spaced   small sets   at small mutual distance:}  
for $d= 2$ and any fixed $n \in \N$, given $r >0$ such that  $| | 1-e ^{i \frac{2\pi j}{n} }|-r| \ge \varepsilon>0$ $\forall j=1, \dots, n$, 
 any set obtained as the union of $n$ measurable sets $\Omega _j$, having the same measure, such that 
 $ \Omega _j\sq B_{\frac \varepsilon2} (e ^{i \frac{2\pi j}{n} })$ $\forall j = 1, \dots, n.$  
 This last example shows in particular that the connectedness of a $\frac{r}{2}$-neighbourhood of $\Omega$ is not sufficient to  avoid $r$-degeneracy.  
 
 }

\end{examples} 

\smallskip

Next we establish a measure-theoretic  sufficient condition for nondegeneracy (Proposition \ref{l:new}) which  is useful, in particular, to  deal with open sets and sets with finite perimeter   (Proposition \ref{l:fat}), 
and hence to deduce Corollaries \ref{t:serrin1} and \ref{t:serrin2}  from Theorem \ref{t:serrin3}. 

  Such condition is expressed in terms of the  
total variation measure $|D \chi _{B _ r (x)}|$, which 
 is given by
(see for instance \cite[page 117]{Ma12})
\begin{equation}\label{f:resphere}
|D \chi_{ B _ r (x)}| (E) = \mathcal H ^ { d-1} (\partial B _ r (x) \cap E) \quad \text{for any measurable set $E$};
\end{equation}
we are thus led back to handle the measure of spherical hypersurfaces considered by Cimmino.  

Hereafter and in the sequel, we denote by $\Omega ^ { (t)}$ the set of points $x \in \R ^ d$ at which $\Omega$ has  $d$-dimensional density equal $t$.

\begin{proposition}\label{l:new} Let $\Omega$ be a bounded measurable set, and let $r>0$.
Assume there exists $\varepsilon >0$ such that
\begin{equation}\label{f:totvar}
\inf_{x \in \mathcal U _\e (\partial ^* \Omega)}    |D \chi_{ B _ r (x)}| (\Omega ^ { (1)} )  >0 
\end{equation}
where $ \mathcal U _\e (\partial ^* \Omega)$ is the set of points at distance smaller than $\e$ from $\partial ^* \Omega$. 
Then $\Omega$ is not $r$-degenerate. 
\end{proposition}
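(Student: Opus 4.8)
The plan is to estimate from below the quotient
$\big|\Omega \cap (B_r(x_1)\Delta B_r(x_2))\big| / \|x_1-x_2\|$
for arbitrary $x_1,x_2\in\partial^*\Omega$, distinguishing two regimes according to whether $x_1$ and $x_2$ are close or far apart, the threshold being the $\varepsilon$ furnished by hypothesis \eqref{f:totvar}.

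\medskip
\textbf{The far regime.} Suppose $\|x_1-x_2\|\ge\varepsilon$. Since $\Omega$ is bounded, say $\Omega\subset B_M(0)$, and $B_r(x_1)\Delta B_r(x_2)$ always contains a fixed-size region near $\partial B_r(x_1)$ (indeed one checks that $\big|B_r(x_1)\Delta B_r(x_2)\big|\ge \delta_0$ for some $\delta_0=\delta_0(r,d,\varepsilon,M)>0$ whenever $\|x_1-x_2\|\ge\varepsilon$ and both centers lie in the bounded region $\mathcal U_\varepsilon(\partial^*\Omega)\cup\partial^*\Omega$), I would first need a lower bound on $\big|\Omega\cap(B_r(x_1)\Delta B_r(x_2))\big|$ that does not vanish. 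Here I use that $x_1\in\partial^*\Omega$ has positive upper density of $\Omega$: so $\big|\Omega\cap B_\rho(x_1)\big|>0$ for every $\rho>0$, and by the $r$-criticality hypothesis (or just by $x_1\in\partial^*\Omega$) one actually has $\big|\Omega\cap B_r(x_1)\big|>0$; combined with a lower bound on how much of $B_r(x_1)$ sticks out of $B_r(x_2)$ I get a positive lower bound. Dividing by $\|x_1-x_2\|\le 2M+2r$ (a uniform upper bound since all centers are in a bounded set) yields a positive lower bound for the quotient in this regime. Actually this step requires some care, since $\big|\Omega\cap B_r(x_1)\big|$ could be small; I expect the cleanest route is to observe that $\partial^*\Omega$ is compact (it is closed and bounded) and that $x\mapsto\big|\Omega\cap B_r(x)\big|$ is continuous, hence bounded below by a positive constant on $\partial^*\Omega$, using $r$-criticality which gives it equal to $c>0$ there.

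\medskip
\textbf{The close regime.} Suppose $\|x_1-x_2\|<\varepsilon$. Write $x_2 = x_1 + \tau e$ with $\tau=\|x_1-x_2\|<\varepsilon$ and $e$ a unit vector, and set $x(s) = x_1 + s\tau e$ for $s\in[0,1]$; note $x(s)\in\mathcal U_\varepsilon(\partial^*\Omega)$ for all $s$. The key point is the elementary monotonicity/nesting of the symmetric differences: the sets $B_r(x_1)\Delta B_r(x(s))$ are increasing in $s$ up to suitable decomposition, so that $B_r(x_1)\Delta B_r(x_2)$ contains, for each $s$, a "slab" sandwiched between $\partial B_r(x(s))$ translated by an amount comparable to $\tau\,ds$ in the normal direction. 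Passing to the coarea/layer-cake representation, one gets
\[
\big|\Omega\cap (B_r(x_1)\Delta B_r(x_2))\big|
\;\ge\; \int_0^1 \Big(\text{measure of the thin shell around }\partial B_r(x(s))\text{ inside }\Omega\Big)\,d s,
\]
and the thin-shell measure, divided by its thickness $\tau$, converges (as the shell thins) to $|D\chi_{B_r(x(s))}|(\Omega^{(1)}) = \mathcal H^{d-1}(\partial B_r(x(s))\cap\Omega^{(1)})$ by \eqref{f:resphere}. Taking the infimum over $s$ and using \eqref{f:totvar} gives
\[
\frac{\big|\Omega\cap (B_r(x_1)\Delta B_r(x_2))\big|}{\|x_1-x_2\|}
\;\ge\; \inf_{x\in\mathcal U_\varepsilon(\partial^*\Omega)} |D\chi_{B_r(x)}|(\Omega^{(1)}) \;-\; o(1)
\;\ge\; \tfrac12\, \inf_{x\in\mathcal U_\varepsilon(\partial^*\Omega)} |D\chi_{B_r(x)}|(\Omega^{(1)}) > 0,
\]
say, for $\|x_1-x_2\|$ small enough; combining with the far regime (and absorbing the intermediate range of $\|x_1-x_2\|$ into the far-regime argument or a compactness argument) one concludes that the infimum defining $r$-degeneracy is strictly positive.

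\medskip
\textbf{Main obstacle.} The delicate point is making the "thin shell" estimate in the close regime rigorous and uniform: one must show that, writing the symmetric difference as a nested union of spherical shells of infinitesimal thickness proportional to $\tau\sin\theta$ (where $\theta$ is the polar angle from the translation axis), the portion of each shell lying in $\Omega^{(1)}$ has $(d-1)$-measure converging to $\mathcal H^{d-1}(\partial B_r(x(s))\cap\Omega^{(1)})$, and that this convergence can be made uniform in $s$ and in the direction $e$. This is essentially a Fubini/coarea argument applied to the function $y\mapsto \mathrm{dist}(y,x(s))$ on the set $\Omega^{(1)}$, together with \eqref{f:resphere}, but the uniformity — needed because $x_1,x_2$ range over all of $\partial^*\Omega$ — is where the boundedness of $\Omega$ and the compactness of $\overline{\mathcal U_\varepsilon(\partial^*\Omega)}$ must be invoked; I would handle it by a contradiction/compactness argument, extracting a convergent sequence of centers and directions along which the quotient would tend to zero and deriving a contradiction with \eqref{f:totvar} via lower semicontinuity of $x\mapsto|D\chi_{B_r(x)}|(\Omega^{(1)})$ (or, more robustly, by a direct geometric estimate on the width of $B_r(x_1)\Delta B_r(x_2)$ as a function of the angular variable).
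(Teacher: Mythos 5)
Your close-regime plan is essentially the paper's proof: the authors likewise foliate $B_r(x_1)\Delta B_r(x_2)$ by translates of the sphere along the segment $[x_1,x_2]$, observe that once $\|x_1-x_2\|\le\varepsilon/2$ (and $x_1,x_2\in\mathcal U_{\varepsilon/2}(\partial^*\Omega)$) the whole segment lies in $\mathcal U_\varepsilon(\partial^*\Omega)$ so that \eqref{f:totvar} applies at every intermediate centre, and convert the sliced integrals into $\mathcal H^{d-1}(\partial B_r(\cdot)\cap\Omega^{(1)})$ via the area formula and \eqref{f:resphere}. The ``main obstacle'' you flag (degeneracy of the shell thickness near the equator) is handled in the paper by exactly your second remedy: one fixes $\delta_0$ so small that $\mathcal H^{d-1}(\{x\in\partial B_r(0):|x^d|\le\delta_0\})\le\alpha/4$ (with $\alpha$ the infimum in \eqref{f:totvar}), discards that band, and integrates by Fubini in the axial coordinate over the graphs $x^d=\pm\sqrt{r^2-\|x'\|^2}+s$ with $\|x'\|\le\sqrt{r^2-\delta_0^2}$, where the Jacobian factor $r/\sqrt{r^2-\|x'\|^2}$ is uniformly close to $1$. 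The resulting bound is exact for every pair at distance $\le 2\delta_0$, so no limiting procedure, uniformity discussion, or lower semicontinuity is needed: the hypothesis is already a \emph{uniform} infimum over $\mathcal U_\varepsilon(\partial^*\Omega)$.

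The genuine defect is your far regime. Three things go wrong there: (a) $r$-criticality is not a hypothesis of Proposition \ref{l:new}, so you may not use $|\Omega\cap B_r(x)|=c$; (b) $\partial^*\Omega$ is a Borel set that is in general \emph{not} closed, hence not compact (this is why the paper systematically works with $\overline{\partial^*\Omega}$ elsewhere); (c) most importantly, positivity of $|\Omega\cap B_r(x_1)|$ together with positivity of $|B_r(x_1)\setminus B_r(x_2)|$ does not yield positivity of $|\Omega\cap(B_r(x_1)\setminus B_r(x_2))|$, since $\Omega\cap B_r(x_1)$ could be entirely contained in $B_r(x_1)\cap B_r(x_2)$. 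The correct way to dispose of pairs with $\|x_1-x_2\|\ge\delta_0$ is to reduce them to the close regime: for any $z=(1-t)x_1+tx_2$ one has $\|p-z\|\le(1-t)\|p-x_1\|+t\|p-x_2\|$, whence $B_r(x_1)\cap B_r(x_2)\subseteq B_r(z)$ and therefore $B_r(x_1)\setminus B_r(z)\subseteq B_r(x_1)\setminus B_r(x_2)$; taking $z$ at distance $\delta_0<\varepsilon$ from $x_1$ (so $[x_1,z]\subset\mathcal U_\varepsilon(\partial^*\Omega)$), the close-regime estimate gives $|\Omega\cap(B_r(x_1)\Delta B_r(x_2))|\gtrsim\alpha\,\delta_0$, while $\|x_1-x_2\|$ is bounded above because $\Omega$ is bounded. (The paper compresses this reduction into the assertion that a degenerating sequence of pairs may be taken with $\|x_n-y_n\|\to0$.) With that repair your plan coincides with the published argument.
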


\begin{remark} (i) Taking points of density $1$  in \eqref{f:totvar} is relevant in order to make the condition satisfied, for instance, by open sets deprived of a spherical hypersurface centred at a boundary point.

(ii) Condition \eqref{f:totvar} is not necessary for nondegeneracy. 
For instance,  consider in the complex plane
the union of the sets $\{z= \rho e^{i\theta} : 0<\rho <\frac 14, \theta \in (-\frac \pi4, \frac \pi4)\}$ and  $\{z= \rho e^{i\theta} : 1<\rho <3, \theta \in (-\frac \pi4, \frac \pi4)\}$. For $r=1$, the set is not degenerate. However,
 the infimum \eqref{f:totvar} vanishes, by taking points $x$ \EEE arbitrarily close to $0$ on the negative real axis.
\end{remark}

{\it Proof of Proposition \ref{l:new}.}  
 Set $\alpha:=\inf_{x \in \mathcal U _\e (\partial ^* \Omega)}    |D \chi_{ B _ r (x)}| (\Omega ^ { (1)} )$. 
Assume by contradiction that $\alpha >0$ but $\Omega$ is $r$-degenerate. Since $\Omega$ is bounded, we can find
two sequences $\{x_n\}, \{y_n\} \subset \partial ^* \Om$,  with $\lim _n \| x_n -y_n \|   =0$, such that
\begin{equation}\label{f:primula} 
\lim_{n \ra +\infty}\frac{\big |\Omega \cap ( B _ r (x_n) \Delta B _ r (y_n)) \big |  }{\| x_n -y_n \| }  = 0.
\end{equation}
Without loss of generality, we can assume that for every $n$ it holds 
$$ x_n, y_n \in {\mathcal U_{\frac \vps2}}(\partial ^* \Om) \quad \text{ and } \quad \|x_n-y_n\|\le \frac \vps2 \,.$$
This implies that, if $[x_n, y _n]$ is the closed segment with endpoints $x_n$ and $y _n$, we have \begin{equation}\label{f:segment} 
|D \chi_{ B _ r (x)}| (\Omega ^ { (1)} ) \ge  \alpha \qquad \forall x \in [x_n, y _ n]\,.
\end{equation} 

For every $x \in \R^d$, we denote $x'= (x^1, \dots, x^{d-1}) \in \R^{d-1}$, and we write $x = (x', x^d)$. 
We fix some $\delta_0 \in (0, \frac \vps4 \wedge r)$ such that 
\begin{equation}\label{f:schiscia}
{\mathcal H} ^{d-1} (\{ x \in \partial B_r (0): |x^d| \le \delta _0\}) \le \frac\alpha4.
\end{equation}
For convenience, we position our system of coordinates so that  $x_n = (0, \delta_n)$ and $y _n = (0, - \delta_n)$. 
Then, for $n$ large enough so that $\delta _n \leq \delta _0$, we get the following estimate:
$$\begin{array}{ll}
& \displaystyle \frac{\big |\Omega \cap ( B _ r (x_n) \Delta B _ r (y_n)) \big |  }{\| x_n -y _n \| }   \displaystyle = \frac{1}{2 \delta_n} \int_{B _ r (x_n) \Delta B _ r (y _n)} \chi_ \Om  (x) \,  dx \\ 
\noalign{\bigskip}
 \ge & \displaystyle \frac{1}{2 \delta_n}  \int_{-\delta_n }^{\delta _n } \Big [ \int_{\|x'\| \le \sqrt{r^2-\delta_0^2}} \chi_{\Om ^{(1)}} (x', \sqrt{r^2-\|x'\|^2}+s) dx' \Big ]ds 
 \\ \noalign{\bigskip}
 + & \displaystyle \frac{1}{2 \delta_n}  \int_{-\delta _n }^{\delta _n } \Big [ \int_{\|x'\| \le \sqrt{r^2-\delta_0^2}} \chi_{\Om ^{(1)}}  (x', -\sqrt{r^2-\|x'\|^2}+s) dx' \Big ]ds
 \\ \noalign{\bigskip}
 \ge & \displaystyle \frac{1}{2 \delta_n}  \frac {\sqrt{r^2-\delta_0^2}}{r}  \int_{-\delta_n }^{\delta_n} \Big [ \int_{\|x'\| \le \sqrt{r^2-\delta_0^2}} \chi_{\Om ^{(1)} }(x', \sqrt{r^2-\|x'\|^2}+s) \frac {r} {\sqrt{r^2-\|x'\|^2}} dx' \Big ]ds
 \\ \noalign{\bigskip}
+ & \displaystyle  \frac{1}{2 \delta_n}  \frac {\sqrt{r^2-\delta_0^2}}{r}  \int_{-\delta_n }^{\delta_n} \Big [ \int_{\|x'\| \le \sqrt{r^2-\delta_0^2}} \chi_{\Om ^{(1)}}   (x', -\sqrt{r^2-\|x'\|^2}+s) \frac {r} {\sqrt{r^2-\|x'\|^2}} dx' \Big ]ds
 \\ \noalign{\bigskip}
\ge & \displaystyle
\frac {\sqrt{r^2-\delta_0^2}}{r} \frac \alpha2\,,
 \end{array}
$$ 
where the last inequality follows from \eqref{f:segment} and \eqref{f:schiscia}. This contradicts \eqref{f:primula} and achieves the proof. \qed  

\bigskip

\bigskip 

\begin{proposition}\label{l:fat}   
Let $\Omega$ be either a bounded open set or a bounded set of finite perimeter, and let $\{\Omega_i\} _i$ denote the family  respectively of its
connected or indecomposable components. 
Then $\Omega$ is  not $r$-degenerate   for any $r < \inf_i \{ {\rm diam} (\Omega _i ) \}$. 
 \end{proposition}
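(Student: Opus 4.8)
The plan is to reduce Proposition \ref{l:fat} to the sufficient condition of Proposition \ref{l:new}: I would show that, under the stated assumptions, there is some $\varepsilon>0$ (small, depending on $r$, on $\inf_i{\rm diam}(\Omega_i)$, and on the geometry of $\Omega$) for which \eqref{f:totvar} holds. In view of \eqref{f:resphere}, this means bounding from below, uniformly in $x\in\mathcal{U}_\varepsilon(\partial^*\Omega)$, the spherical measure $\mathcal H^{d-1}(\partial B_r(x)\cap\Omega^{(1)})$. Morally, the only way this can fail is that the sphere $\partial B_r(x)$ lies entirely inside $\Omega^{(1)}$ or entirely outside it; both possibilities should be ruled out, when $x$ is close to $\partial^*\Omega$, by the fact that $x$ sits on (the closure of) a component of $\Omega$ which is connected --- resp.\ indecomposable --- and has diameter larger than $r$.

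In the open case, since $\Omega$ is open we have $\Omega\subseteq\Omega^{(1)}$ and $\partial B_r(x)\cap\Omega$ is a relatively open subset of the sphere, so it suffices to show that it is nonempty with a size bounded away from zero. Fixing $x_0\in\partial^*\Omega$, which lies in $\overline{\Omega_i}$ for some component $\Omega_i$, the continuous function $y\mapsto|x-y|$ on the connected set $\Omega_i$ takes values arbitrarily close to $|x-x_0|<\varepsilon$; exploiting ${\rm diam}(\Omega_i)>r$ along $\Omega_i$ --- this is where the hypothesis enters --- one checks that it also attains the value $r$, hence $\partial B_r(x)\cap\Omega\neq\emptyset$. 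The uniformity I would obtain by contradiction and compactness (here the boundedness of $\Omega$ is used): a sequence $x_n$ with $\mathcal H^{d-1}(\partial B_r(x_n)\cap\Omega)\to 0$ would subconverge to some $\bar x$ attached to a component $\Omega_i$, and passing to the limit $\partial B_r(\bar x)$ would meet $\Omega_i$ in a set of zero $\mathcal H^{d-1}$-measure; but the crossing just established, together with the openness of $\Omega_i$, makes $\partial B_r(\bar x)\cap\Omega_i$ a nonempty relatively open piece of the sphere --- a contradiction.

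In the finite-perimeter case I would run the same scheme with $\Omega^{(1)}$ and $\mathcal F\Omega$ in place of $\Omega$ and $\partial\Omega$, relying on the decomposition into indecomposable components of \cite{ACMM01} and on the fact, recalled just before Corollary \ref{t:serrin2}, that $\partial^*\Omega$ is contained --- up to $\mathcal H^{d-1}$-null sets --- in the closure of $\mathcal F\Omega$. The topological intermediate value step is then replaced by its measure-theoretic counterpart: $t\mapsto|\Omega_i\cap B_t(x)|$ is continuous and increases from $0$ to $|\Omega_i|>0$, and indecomposability forbids it from doing so by a jump across a sphere that carries no reduced boundary; hence, for $t=r$, the sphere $\partial B_r(x)$ must meet $\Omega^{(1)}$ in positive spherical measure, which a compactness argument again turns into a uniform lower bound.

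The step I expect to be genuinely delicate is the \emph{uniform} quantitative lower bound on $\mathcal H^{d-1}(\partial B_r(x)\cap\Omega^{(1)})$ for all $x\in\mathcal{U}_\varepsilon(\partial^*\Omega)$: one has to prevent the crossing from becoming arbitrarily thin as $x$ moves, which is precisely where the gap ${\rm diam}(\Omega_i)-r>0$ must be exploited in a stable way --- either through the compactness argument sketched above or via an explicit modulus of continuity depending on that gap. In the finite-perimeter setting there is the additional, and I think harder, point of converting the measure-theoretic indecomposability of $\Omega_i$ into the property that every sphere of radius $r$ centred near $\partial^*\Omega$ is crossed; for that, the fine structure of sets of finite perimeter and the behaviour of the slices $\Omega_i\cap B_t(x)$ are the tools I would rely on.
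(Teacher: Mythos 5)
Your reduction to Proposition \ref{l:new} is exactly the paper's route, and so is the core mechanism: a sphere $\partial B_r(x)$ meeting $\Omega^{(1)}$ in (asymptotically) zero $\mathcal H^{d-1}$-measure must split a connected, resp.\ indecomposable, component. The implementation differs in the finite-perimeter case, and the paper's choice is worth noting because it is precisely what makes your two ``delicate'' points go through simultaneously. The paper does not argue sphere by sphere and then invoke compactness: it takes the contradicting sequence $x_n$ with $\mathcal H^{d-1}(\partial B_r(x_n)\cap\Omega^{(1)})\to 0$, extracts $L^1$-limits $A$ and $C$ of $\Omega\cap B_r(x_n)$ and $\Omega\setminus B_r(x_n)$, writes $\mathrm{Per}(A_n)$ and $\mathrm{Per}(C_n)$ via the decomposition of the reduced boundary along $\partial B_r(x_n)$, and uses lower semicontinuity of the perimeter to get $\mathrm{Per}(A)+\mathrm{Per}(C)\le\mathrm{Per}(\Omega)$, contradicting indecomposability. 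This packages the ``crossing'' and the ``uniformity'' in one stroke. Your separate compactness step is fine in the open case, where $x\mapsto\mathcal H^{d-1}(\partial B_r(x)\cap\Omega)$ is lower semicontinuous because $\partial B_r(x)\cap\Omega$ is relatively open in the sphere; but for a general finite-perimeter set this semicontinuity can fail (the paper itself alludes to this phenomenon in the remark on essential connectedness), so in that case you should replace your compactness argument by the $L^1$-limit construction above rather than try to pass $\mathcal H^{d-1}(\partial B_r(x_n)\cap\Omega^{(1)})$ to the limit pointwise.

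There is one step that does not close as written: the claim that $\mathrm{diam}(\Omega_i)>r$ forces the function $y\mapsto|x-y|$ to attain the value $r$ on $\Omega_i$ (equivalently, that $|\Omega_i\setminus B_r(x)|>0$) for every $x$ within $\varepsilon$ of $\partial^*\Omega$. The diameter bound only yields, via the triangle inequality, a point of $\Omega_i$ at distance greater than roughly $\mathrm{diam}(\Omega_i)/2-\varepsilon$ from $x$, not greater than $r$. Concretely, a very flat open ellipsoid with long semi-axis $0.6r$ and tiny short axes has diameter $1.2r>r$, yet it is entirely contained in $B_r(x)$ when $x$ lies on the flat part of its boundary near the centre, so the sphere $\partial B_r(x)$ does not cross it at all. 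So this is a genuine gap in your intermediate-value step, and it is exactly the point where the paper's own proof is also thinnest: there the corresponding assertion is that the limit pair $(A,C)$ is a \emph{nontrivial} partition of $\Omega$, which is stated but not justified ($|A|>0$ is clear since $\bar x\in\overline{\partial^*\Omega}$, but $|C|>0$ is the same issue you face). If you want your write-up to be airtight, this is the place where additional work, or a strengthened hypothesis relating $r$ to the geometry of $\Omega_i$ beyond its diameter, is needed.
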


 \begin{remark} While sets $\Omega$  with a single component are   not $r$-degenerate   if and only if $r < {\rm diam} (\Omega )$, for multiply connected domains the condition $r < \inf_i \{ {\rm diam} (\Omega _i ) \}$ is sufficient but clearly not necessary   to avoid $r$-degeneracy: for instance, the disjoint union of two balls of radius $R$ can be  not $r$-degenerate  also for radii $r\geq 2 R$ provided the balls are close enough.  \end{remark}
 
 \begin{remark} In the light of Proposition \ref{l:fat}, it is natural to ask if there exists some notion of ``connectedness''   avoiding $r$-degeneracy also for arbitrary 
 measurable sets. To the best of our knowledge, the unique kind of such a general notion   for Borel sets  $\Omega$ has been proposed in  \cite{CCDM} under the name of {\it essential connectedness}, and amounts to ask that $\mathcal H ^ { d-1} (\Omega ^  {(1)}  \cap \partial ^* \Omega _+ \cap \partial ^ * \Omega _ -) >0$ for any nontrivial Borel partition $(\Omega _+ , \Omega _-)$ of $\Omega$.   However, if $\Omega$ has not finite perimeter,  relying on the possible lack of semicontinuity of the map $t \mapsto \mathcal H ^ { d-1} (\Omega \cap B _ {r} ( x_t)) )$ as $x _ t \to x \in \overline {\partial ^* \Omega}$,  it is possible to construct examples of Borel sets $\Omega$ which are essentially connected but  degenerate   for some $r < {\rm diam} (\Omega)$.

 \end{remark}

{\it Proof of Proposition \ref{l:fat}}.  We focus our attention on the case of finite perimeter sets, 
since for open sets  the proof can be obtained   in a similar way. Working component by component,  we are reduced to prove that, if $\Omega$ is indecomposable,  then it is not $r$-degenerate for any $r < {\rm diam} (\Omega)$. 
To that aim, it is enough to prove that it satisfies, for some $\e>0$, condition \eqref{f:totvar} in  Proposition \ref{l:new}. 
 Assume by contradiction this is not the case. 
 Then, in view of \eqref{f:resphere}, it would be possible to find a sequence $\{x_n\}$ converging to a point $\overline x \in \overline {\partial ^ * \Omega}$ such that
 \begin{equation}\label{f:degsphere}
  \lim _n \mathcal H ^ { d-1} (\partial B _ r (x_n) \cap \Omega ^ { (1)} ) = 0\,.
  \end{equation}  
Up to subsequences, we denote by $A$ and $C$ the limit in $L ^ 1$ respectively of the characteristic functions of the  sets
 $A_n := \Omega \cap B _ r (x_n)$  and $C_n := \Omega \setminus B _ r  (x_n)$. 
 We are going to show that they provide a nontrivial partition of $\Omega$  such that
 ${\rm Per} (\Omega) = {\rm Per} (A) + {\rm Per} (C)$.  
 We have
 $$\begin{cases} {\rm Per} (A_n) = {\rm Per} (\Omega, B _ r (x_n))  + \mathcal H ^ { d-1} (\partial B _ r (x_n) \cap  \partial ^ * A_n  \cap \Om^{(\frac 12)})  +  \mathcal H ^ { d-1} (\partial B _ r (x_n) \cap \Omega ^ { (1)} )   & 
 \\ 
 \noalign{\bigskip} 
 {\rm Per} (C_n) = {\rm Per} (\Omega, \R ^ d \setminus \overline {B _ r} (x_n))  + \mathcal H ^ { d-1} (\partial B _ r (x_n) \cap  \partial ^ *C_n \cap \Om^{(\frac 12)})  +  \mathcal H ^ { d-1} (\partial B _ r (x_n) \cap \Omega ^ { (1)} )
 \end{cases} 
  $$ 
Since   perimeter is lower semicontinuous with respect to $L ^ 1$-convergence, and we have  ${\mathcal H}^{d-1} ( \partial ^ *A_n   \cap  \partial ^ *C_n   \cap \Om^{(\frac 12)})=0$,  by   passing to the limit in the two relations above  and summing, we get 
  $${\rm Per} (A)+ {\rm Per} (C) \leq {\rm Per} (\Om) + 2  \lim _n \mathcal H ^ { d-1} (\partial B _ r (x_n) \cap \Omega ^ { (1)} ).$$
The conclusion follows by condition  \eqref{f:degsphere}. \qed

\smallskip

\bigskip

\subsection{About some Steiner symmetric sets obtained by reflection} 

A measurable set $\omega$  is {\it Steiner symmetric} about a hyperplane $H$ with unit normal $\nu$ if 
the following equality holds 
as an equivalence between Lebesgue measurable sets: 
$$
\omega =  \Big \{ x \in \R ^d \ :\ x = z + t \nu \, , \ z \in H  \,  , \ |t| < \frac{ 1}{2} \mathcal H ^ 1 \big (
 \omega \cap \big \{ z + t \nu  : t \in \R \big \} \big ) 
   \Big \}
 \,.$$ 
 In Proposition \ref{l:reflection} below, we focus our attention on a special kind of Steiner symmetric sets obtained by reflection, 
 that we shall need to handle in the proof of Theorem \ref{t:serrin3}. 
 
 \smallskip 
To that aim and in the sequel, we shall make repeatedly use of the following elementary observation:
given a measurable subset $\omega$ of $\R^d$, it holds 
\begin{equation}\label{f:top}
\omega ^ {(1)} \setminus  \overline{ \partial ^ * \omega } =  {\rm int} (\omega ^ { (1)} ) \,, \qquad \omega ^ {(0)} \setminus  \overline{ \partial ^ * \omega } =  {\rm int} (\omega ^ { (0)} )\,.
\end{equation}
In particular,  $\R ^d$ can be decomposed as a disjoint union, 
\begin{equation}\label{f:decompose} 
\R ^d = {\rm int} ( \omega ^ {(1)})   \sqcup {\rm int}  (\omega ^ {(0)}) \sqcup \overline{ \partial ^* \omega}\,. 
\end{equation}

Let us prove the first equality in \eqref{f:top}, the second one being analogous. The inclusion $\supseteq$ is immediate. Viceversa, let $x \in \omega ^ {(1)} \setminus  \overline{ \partial ^ * \omega }$, and let $U$ be an open neighbourhood of $x$ which does not meet $\partial ^ *\omega$.  
Let us prove that   $U \subset \omega ^ { (1)}$. 
By Federer's Theorem, $\omega$ is of finite perimeter in $U$. 
Then, by the relative isoperimetric inequality, 
$\min \{  | \omega^c \cap U | , |\omega \cap U | \} = 0$. But it cannot be $| \omega\cap U | = 0$, because $x \in \omega^ {(1)}$. 
Hence $|\omega^c \cap U|= 0$. Then, $U$ cannot contain any point of density $0$ for $\omega$, since such point would be of density $1$ for $\omega ^ c$, against $|U\cap \omega^c|= 0$. 
Recalling that $U$ does not meet $\partial ^* \omega$, we conclude that $U \subset \omega ^ { (1)}$.  
\bigskip

\begin{proposition}\label{l:reflection} Let $H$ be a hyperplane with unit normal $\nu$, and
 let $\omega$ be a bounded measurable set contained into $H_- = \{H + t \nu \, :\, t \leq0 \}$ such that
  \begin{equation}\label{f:singleton} \forall z\in H\,, \quad \overline{\partial ^* \omega}  \cap \big \{ z  + t \nu \ :\ t <0 \big \}  \text{  is empty or a singleton\,, }\end{equation} 
so that $\omega$ can be viewed as the subgraph of the function $g:H \ra \R_-$ defined by  $g(z)=0$ if the intersection  in \eqref{f:singleton} is empty and 
$g(z)= t$ if such intersection is $ z  + t \nu $.   The following properties hold: 

\smallskip 
\begin{itemize}
\item[(i)]  $|\overline{\partial ^* \omega} | = 0$;

\smallskip
\item[(ii)] $\omega$ is essentially open;

\smallskip
\item[(iii)]  the union of $\omega$ and its reflection about $H$ is Steiner-symmetric about $H$;  

\smallskip
\item[(iv)]  the function $g$ is continuous. 
\end{itemize} 
\end{proposition}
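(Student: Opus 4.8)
The plan is to establish the four claims in the logical order (i) $\to$ (ii) $\to$ (iii) $\to$ (iv), since each builds on the previous one, and to keep in mind throughout the structural decomposition $\R^d = {\rm int}(\omega^{(1)}) \sqcup {\rm int}(\omega^{(0)}) \sqcup \overline{\partial^*\omega}$ recorded in \eqref{f:decompose}, together with the fact that $\omega$ is the subgraph of $g:H\to\R_-$. Write $\pi:\R^d\to H$ for the orthogonal projection and, for $z\in H$, let $L_z = \{z+t\nu : t\in\R\}$ be the fibre over $z$; the hypothesis \eqref{f:singleton} says each fibre meets $\overline{\partial^*\omega}$ in at most one point, and on $L_z$ the set $\omega$ fills exactly the open half-line $\{z+t\nu : t<g(z)\}$ below that point (or, if $g(z)=0$, either all of $\{t<0\}$ up to the point at height $0$, or nothing — one must be a little careful here, but the subgraph description is taken as given).

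\emph{Step (i): $|\overline{\partial^*\omega}|=0$.} The set $\overline{\partial^*\omega}$ is closed and bounded, hence measurable; by Fubini its Lebesgue measure is $\int_H \mathcal H^1(\overline{\partial^*\omega}\cap L_z)\,d\mathcal H^{d-1}(z)$. By \eqref{f:singleton} each fibre contributes a single point, whose $\mathcal H^1$-measure is $0$, so the integral vanishes. \emph{Step (ii): $\omega$ is essentially open.} I would show $\omega$ is equivalent to ${\rm int}(\omega^{(1)})$. Indeed, from \eqref{f:decompose} and Step (i) we get $|\omega \triangle {\rm int}(\omega^{(1)})| = |\omega\triangle\omega^{(1)}| = 0$ (the symmetric difference of $\omega$ with its density-one set is always Lebesgue-null, and removing $\overline{\partial^*\omega}$ costs nothing), so $\omega$ coincides a.e.\ with the open set ${\rm int}(\omega^{(1)})$.

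\emph{Step (iii): the symmetrized set is Steiner symmetric.} Let $\widetilde\omega := \omega\cup \mathcal R_H(\omega)$ where $\mathcal R_H$ is reflection about $H$. On each fibre $L_z$, up to null sets $\omega$ fills $\{t<g(z)\}$, so $\mathcal R_H(\omega)$ fills $\{t>-g(z)\}=\{t>|g(z)|\}$ — wait, more simply $\{-g(z)<t\}$ with $-g(z)\ge 0$ — and hence $\widetilde\omega\cap L_z$ is, modulo a null set, $\{t<g(z)\}\cup\{-g(z)<t\}$. Since $g(z)\le 0$, this equals $\R \setminus [g(z),-g(z)]$ when $g(z)<0$ and $\R\setminus\{0\}$ (i.e.\ all of $\R$ mod null) when $g(z)=0$ — that is \emph{not} symmetric. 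So in fact I must use that $\omega\subset H_-$ is the subgraph, which fills the half-line \emph{below} $g(z)$: that is $\{t : t < g(z)\}$ is wrong for a subgraph of a negative function; rather $\omega\cap L_z = \{z+t\nu: t<g(z)\}$ is the correct reading only if we interpret ``subgraph'' as ``points below the graph'', and since $g(z)\le 0$ the portion of $L_z$ in $\omega$ is $(-\infty,g(z))\cdot\nu$. Then $\mathcal R_H(\omega)\cap L_z=(-g(z),+\infty)\cdot\nu=(|g(z)|,\infty)\cdot\nu$, and the union is $\R\setminus[g(z),|g(z)|]$, an interval symmetric about $0\in H$. Thus each fibre of $\widetilde\omega$ is, up to a null set, a symmetric open interval centred on $H$ of length $-2g(z)=2\mathcal H^1(\omega\cap L_z)$; this is exactly the defining property of Steiner symmetry, so (iii) follows. \emph{Step (iv): continuity of $g$.} This is the substantive point. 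I would argue by contradiction: if $g$ fails to be continuous at $z_0$, then either $\limsup_{z\to z_0} g(z) > g(z_0)$ or $\liminf_{z\to z_0} g(z) < g(z_0)$ along some sequence $z_k\to z_0$. Using that the graph points $z_k + g(z_k)\nu$ lie in $\overline{\partial^*\omega}$ (closed), any limit point lies in $\overline{\partial^*\omega}$, hence on the (single-point) intersection of $L_{z_0}$ with $\overline{\partial^*\omega}$, which forces $g(z_k)\to g(z_0)$ — ruling out the first alternative and half of the second. For the remaining case one exploits the subgraph structure together with the decomposition \eqref{f:top}: a point strictly between the limit height and $g(z_0)$ on $L_{z_0}$ must lie in ${\rm int}(\omega^{(1)})$ or ${\rm int}(\omega^{(0)})$; either way an open neighbourhood of it is entirely inside $\omega$ or entirely outside $\omega$ (mod null), and projecting this neighbourhood to $H$ contradicts the assumed oscillation of $g$ near $z_0$.

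\emph{Main obstacle.} I expect Step (iv) to be where the real work lies, and specifically the care needed in relating pointwise values of $g$ (defined via $\overline{\partial^*\omega}$, a measure-zero set) to the measure-theoretic interior/exterior of $\omega$; one has to make sure that "the fibre below the graph is in $\omega$ up to null sets" can be upgraded, near a putative discontinuity, to an honest inclusion of an open box, which is exactly what \eqref{f:top} is designed to provide. Steps (i)--(iii) are essentially Fubini plus bookkeeping; the only place one could slip is the orientation convention for the subgraph and the reflection, which I flagged above and which must be pinned down once at the start.
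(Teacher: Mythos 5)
Your steps (i) and (ii) are correct and coincide with the paper's argument (Fubini on the fibres, then \eqref{f:top} applied to the null set $\overline{\partial^*\omega}$). The problem is step (iii), where there is a genuine gap: you write that ``the subgraph description is taken as given'' and then verify Steiner symmetry fibre by fibre from that description. But the only actual hypothesis is \eqref{f:singleton}, namely that each fibre $L_z=\{z+t\nu\}$ meets $\overline{\partial^*\omega}$ in at most one point below $H$; the identification of $\omega\cap L_z$, modulo null sets, with the interval determined by $g$ is precisely the content of (iii) and must be proved, not assumed. Concretely, \eqref{f:decompose} only tells you that the open segment $(z+g(z)\nu,\,z)$, which misses $\overline{\partial^*\omega}$, lies entirely in ${\rm int}(\omega^{(1)})$ \emph{or} entirely in ${\rm int}(\omega^{(0)})$; a priori the second alternative is possible (the fibre could carry no mass even though $\overline{\partial^*\omega}$ touches it at one point). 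Ruling it out is the real work: one uses that $z+g(z)\nu$, being in $\overline{\partial^*\omega}$, is a limit of points $p_n\in\omega^{(1)}$, that these $p_n$ must then sit on nearby fibres $L_{z_n}$, and that those fibres would be forced to contain two points of $\overline{\partial^*\omega}$, contradicting \eqref{f:singleton}. None of this appears in your proposal.

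A secondary but real error compounds this: the fibre geometry you settle on is wrong. Since $\omega$ is bounded, $\omega\cap L_z$ cannot be the half-line $\{t<g(z)\}$ (infinite measure); it is the bounded segment $\{g(z)<t<0\}$ between the graph point and $H$, so the symmetrized fibre is the interval $(g(z),-g(z))$. Your final expression $\R\setminus[g(z),|g(z)|]$ is the complement of an interval, not an interval, and is inconsistent with the length $-2g(z)=2\mathcal H^1(\omega\cap L_z)$ that you correctly want. Your step (iv) is a reasonable sketch and close to the paper's argument (the singleton property kills the case $\lambda<0$, and the segment $S=(z_0+g(z_0)\nu,z_0)$ handles $\lambda=0$), but both of its subcases invoke (iii), so it inherits the gap above.
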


\proof Statement (i) is an immediate consequence of the assumption \eqref{f:singleton} and Fubini Theorem. 
To obtain statement (ii), it is enough to show that $\omega ^ { (1)}$ is essentially open. Such  property follows from statement (i), after applying \eqref{f:top}. 
To prove statement (iii), it is enough to show that 
the union of $\omega^ {(1)} $ and its reflection about $H$ is Steiner-symmetric about $H$. To that aim, let us fix  $z \in H$ such that $\overline {\partial ^* \omega} \cap \big \{ z  + t \nu \, :\, t  <0  \big \}= \{p\} $, and let us show that the open segment $(p, z)$  is contained into $\omega ^ { (1)}$. Recalling the decomposition \eqref{f:decompose} and assumption \eqref{f:singleton}, we infer that the open segment $(p, z)$ is entirely contained either in ${\rm int} (\omega ^ { (1)})$ or in ${\rm int } (\omega ^ { (0)})$. In the first case we are done. It remains to exclude that it is entirely contained in ${\rm int} ( \omega ^ { (0)})$. Assume by contradiction this is the case. We observe that, by the first equality in \eqref{f:top}, since $p  \in \overline {\partial ^* \omega}$, $p $ is the limit of a sequence of points $\{ p _n\} \subset \omega ^ { (1)}$. Since we are assuming that   the open segment $(p, z)$ is entirely contained in ${\rm int}( \omega ^ { (0)})$, the points $p _n$ do not belong to such segment, so that they belong to straight lines of the form $\{ z_ n + t  \nu\, : \, t \in \R \}$,  for a sequence of points $\{z_n\}\subset H $ converging to $z$. But then some of these straight lines would necessarily contain at least two points of $\overline {\partial ^* \omega}$ 
(otherwise the segments $(p _n, z_n)$ would be entirely contained into ${\rm int}(\omega ^ { (1)})$ and could not converge to  $(p, z)$ which is entirely contained into ${\rm int}(\omega ^ { (0)})$).

Let $g$ be the function defined via \eqref{f:singleton} as in the statement, so that $\omega$ can be viewed as the subgraph of $g$. To show the continuity of $g$ at a fixed point   $z_0 \in H$, we consider separately the cases $g ( z_0) = 0$ and $g ( z_0) <0$. If $g ( z _0)= 0$, 
we have to prove that, for any 
sequence $\{z_n\} \subset H$ converging to $z_0$,  the sequence $\{g ( z_n)\}$ converges to $0$. Up to a subsequence, we may assume $g ( z_n ) \to \lambda$, with $\lambda \leq 0$. If $\lambda < 0$, for $n$ large enough we have $g ( z_n) <0$, which by definition  of $g$ means that $z _n +  g ( z_n ) \nu \in \overline {\partial ^* \omega}$;  passing to the limit in the last relation, we get $z_0 + \lambda \nu \in  \overline {\partial ^* \omega}$, against $g ( z_0) = 0 $. 

Assume now $g ( z_0) < 0$, and let $\{z_n\} \subset H$ be any sequence converging to $z_0$. We may assume  that
 $g ( z_n) \to \lambda $, with $\lambda \leq 0$. If $\lambda < 0$, we get as above $z_0 + \lambda \nu \in 
\overline {\partial ^* \omega}$; by \eqref{f:singleton},  we conclude that $g ( z_0) = \lambda$.
It remains to show that the case $\lambda = 0 $ cannot occur. 
Assume $\lambda  = 0$. We consider the open segment $S:= (z_0 + g ( z_0)\nu , z_0) $. By \eqref{f:singleton}, $S \cap \overline {\partial ^* \omega} = \emptyset$, and hence $S$ is entirely contained either into ${\rm int} (\omega ^{(1)})$ or into ${\rm int} (\omega ^ { (0)})$.  If $S \subseteq {\rm int} (\omega ^{(1)})$, 
we pick a point $x_0 \in S$ and a small ball $B _ \e ( x_0) \subset \omega ^{(1)}$.  For every $x \in B _ \e ( x_0)$, denoting by $z_x$ its projection onto $H$ (in particular, $z_{x_0} = z_0$),   by statement (iii) we have that the segment $(x, z_x)$ lies into $\omega ^ {(1)}$.  This property leads to a contradiction, as it implies 
on one hand that   $z_0 \in {\rm int} (\omega ^{(1)}) $ and on the other hand that that  $g ( z_n) < 0 $ for $n$ large enough, 
which in turn gives $z_0 \in \overline  {\partial ^* \omega}$ (passing to the limit in the relation $z _n + g ( z_n ) \nu \in \overline  {\partial ^* \omega}$). 
If $S\subseteq {\rm int} (\omega ^{(0)})$, 
we can pick a point $x_0 \in S$ and a small ball $B _ \e ( x_0) \subset \omega ^{(0)}$. 
This contradicts statement (iii) and the fact that, since the point $z_0 + g ( z_0)\nu$ belongs to $\overline  {\partial ^* \omega}$, it is the limit of a sequence of points of density $1$ for $\omega$.  \qed

\bigskip

\section{Proof of Theorem \ref{t:serrin3}}\label{sec:proof}

\smallskip
{\bf Outline of the proof.} We observe first of all that the equality \eqref{f:hyp3} continues to hold at every point $x \in \overline{\partial ^ * \Omega}$. Then we 
fix a direction $\nu \in S ^ {d-1}$, and  we consider an initial hyperplane $H _ 0$ with unit normal $\nu$, not intersecting $\overline {\partial ^* \Omega}$.  Such an initial hyperplane exists because, since $\Omega$ has finite measure and is $r$-critical, it is necessarily bounded. We start moving $H _ 0$ in the direction of its normal $\nu$ to new positions, 
so that at a certain moment of the process it starts intersecting $\overline {\partial ^* \Omega}$. 
We continue the movement in direction $\nu$, 
 and we denote by $H _t$ the hyperplanes thus obtained. We set: 
 $$
   \begin{array}{ll} 
& H_t ^- := \text { the closed halfspace determined by $H_t$  containing $H_0$} 
 \\ \noalign{\medskip}
 &H_t ^+  := \text { the closed halfspace determined by $H_t$  not containing $H_0$} 
\\ \noalign{\medskip}
& \Omega_ t  := \Omega \cap H _ t ^ {-}
 \\ \noalign{\medskip}
& \reflext := \text { the reflection of $\Omega _t $ about $H _t$}. 
\end{array}
$$

\smallskip

$\bullet$ We say that {\it symmetric inclusion} holds at $t$ if
 \begin{equation}\label{f:moving} 
\reflext \subset \Omega \qquad \text{ and } \qquad \Omega _t   \cup \reflext   \text{ is Steiner symmetric about $H _t$\,.} \end{equation} 
 
$\bullet$ We say that  symmetric inclusion occurs at $t$ if  {\it with away contact} if \eqref{f:moving} holds and there exists an ``away contact point'', namely a point
 \begin{equation}\label{f:touching}
 p' \in  \big [ \overline {\partial ^* \reflext } \cap \overline {\partial ^*  \Omega}\big ] \setminus H _ t  \, .
  \end{equation}
when \eqref{f:moving} holds but \eqref{f:touching} is false, we say that symmetric inclusion at $t$ holds  {\it without away contact}. 

\smallskip
$\bullet$ We say that symmetric inclusion occurs at $t$ {\it with close contact}  if \eqref{f:moving} holds and there exists a ``close contact point'', namely a point  \begin{equation}\label{f:touching2}
H _ t \ni  q = \lim _n q _{1,n} = \lim _n q  _ {2,n},   \quad  q _{i, n}  \in  \overline{\partial ^* \Omega}\cap \{ q + t \nu\, :\, t \in \R \}, \quad   q _ {1,n} \neq  q _{2,n}\,,  \end{equation}

 (where one among $q _{1,n} $ and $q  _ {2,n}$ will always happen to belong to $H _ t ^ +$, while the other one may fall in $H _ t ^ +$ as well as in  $H _ t ^-$.)   
Notice that symmetric inclusion can occur at the same $t$ with both away contact and close contact. 

\smallskip
 The statement will be obtained in the following steps, which are carried over separately in the next subsections.  
 
 \medskip
 {\bf Step 1 (start)} 
 
 There exists $\e >0$ such that, for every $t \in [0, \e)$, symmetric inclusion holds.  

 \medskip
 {\bf Step 2 (the stopping time: no close contact without away contact)} 
 
 Setting
 $$T:= \sup \Big \{ t >0 \ :\ \text{ for all  $s \in [0, t)$,  symmetric inclusion occurs without away contact} \Big \} \, , $$ 
we have $T < + \infty$, and symmetric inclusion occurs at $T$ with away or with close contact. 
But we are able to rule out the case of close contact without away contact, so necessarily at $t = T$ we are in the situation of away contact.

 \medskip
 {\bf Step 3 (decomposition of $\Omega$ into symmetric and non-symmetric part)} 
 
 We show that $\Omega$ can be decomposed as 
$$
\Omega= \Omega^s\sqcup \Omega ^{ns}\, , 
$$ 
where  $\Omega ^ s$ is an {\it open set}  representing the  Steiner symmetric part of $\Omega$, given by 
$$
\Omega ^ s := \bigcup \Big \{ (p, p') \ :\ p' \text{ is an away contact point, \ $p$ is its symmetric about $H _ T$} \Big \} \,, 
$$
$(p, p')$ being the open segment with endpoints $p$ and $p'$, 
and $\Omega ^ { ns}:= \Omega \setminus \Omega ^ {s}$ represents the non-symmetric part. Moreover, denoting by $ \Omega ^ s _ i$  
the open connected components of $\Omega ^ s$, 
we  prove that:
\begin{eqnarray}
& \text{ 
 $\overline{\partial ^ * \Omega^ s_i} \cap (H_ T^ \pm  \setminus H _ T)$  are  connected sets;} 
  & \label{f:cc} 
\\ \noalign{\medskip} 
& \overline {\partial ^* \Omega ^s} \cap \overline {\partial ^* \Omega ^{ns}} \subset H _ T\,.
 & \label{f:studiobordi}
\end{eqnarray}

 \medskip

 \medskip
 
 {\bf Step 4 (conclusion)}   
 We show that the open connected components of $\Omega ^ s$  are balls
of the same radius $R> r/2$, lying at distance larger than or equal to $r$, while the set $\Omega ^ { ns}$ is Lebesgue negligible.  

 \bigskip
\subsection{ Proof of Step 1.}   The proof is based on the following lemma.

\begin{lemma}[no converging pairs]\label{l:nopairs} 
Let $\Omega\subset \R ^d $ be a measurable set which is $r$-critical and  not $r$-degenerate.  Assume that $\Omega $ is contained  into
$ H _0 ^+:=\{ z + t \nu\, :\, z \in H _0\, , \ t \geq 0 \}$, 
 $H_0$ being a hyperplane  with unit normal $\nu$. Then there cannot exist two sequences of  points 
$\{ p_{1,n}\}$, $\{p_{2,n}\}$ in $\overline {\partial ^* \Omega}\cap H _0 ^+$  
  which  for every fixed $n$ are  distinct, with  the same projection onto $H_0$, and 
at infinitesimal distance from $H _ 0$ as $n \to + \infty$. 
\end{lemma}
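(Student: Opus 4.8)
The plan is to argue by contradiction: suppose two such sequences $\{p_{1,n}\}$, $\{p_{2,n}\}$ exist, with $p_{i,n}\in\overline{\partial^*\Omega}\cap H_0^+$, sharing the same projection $z_n\in H_0$, distinct for each $n$, and with $\mathrm{dist}(p_{i,n},H_0)\to 0$. First I would reduce to a normalized local picture: passing to a subsequence, the common projections $z_n$ converge (after translating, to the origin), and the two heights $t_{i,n}:=\langle p_{i,n}-z_n,\nu\rangle\ge 0$ both tend to $0$; say $t_{1,n}\le t_{2,n}$. The key point to exploit is that $\Omega\subset H_0^+$, so near each $p_{i,n}$ the set $\Omega$ lies entirely on one side of the hyperplane $H_0$ at infinitesimal distance, which forces a strong one-sided flatness on $\overline{\partial^*\Omega}$ there.

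The heart of the argument should be a comparison of the two balls $B_r(p_{1,n})$ and $B_r(p_{2,n})$ against the $r$-criticality condition \eqref{f:hyp3}, which (as recalled in the outline of Section \ref{sec:proof}) extends to all points of $\overline{\partial^*\Omega}$; hence $|\Omega\cap B_r(p_{1,n})|=|\Omega\cap B_r(p_{2,n})|=c$ for every $n$, so
$$
\big|\Omega\cap\big(B_r(p_{1,n})\setminus B_r(p_{2,n})\big)\big| = \big|\Omega\cap\big(B_r(p_{2,n})\setminus B_r(p_{1,n})\big)\big|.
$$
Since $p_{1,n},p_{2,n}$ differ only in the $\nu$-direction by $\tau_n:=t_{2,n}-t_{1,n}>0$, the symmetric difference $B_r(p_{1,n})\Delta B_r(p_{2,n})$ is a thin lens-shaped region of width $\sim\tau_n$, split by the hyperplane through the two centres' midpoint into a ``lower'' part (closer to $H_0$) contained in $B_r(p_{1,n})\setminus B_r(p_{2,n})$ and an ``upper'' part contained in $B_r(p_{2,n})\setminus B_r(p_{1,n})$. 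Because $\Omega\subset H_0^+$ and both $p_{i,n}$ are at height $o(1)$ from $H_0$, for $n$ large essentially none of the ``lower'' cap of this lens meets $\Omega$ — more precisely, the portion of $B_r(p_{1,n})\setminus B_r(p_{2,n})$ lying in $H_0^+$ has measure $o(\tau_n)$, since it is confined to a slab of height $o(1)$ near the ``equator'' of the lens where the lens is thin. Meanwhile the ``upper'' cap sits well inside $H_0^+$, near the top of $B_r(p_{2,n})$, where $\Omega$ may well have density $1$; one shows its intersection with $\Omega$ has measure of exact order $\tau_n$ times a quantity bounded below — this is exactly where $r$-criticality combined with the reflected-cap geometry of the moving planes setup is used to guarantee that the two balls genuinely ``see'' different portions of $\Omega$. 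Dividing by $\|p_{1,n}-p_{2,n}\|=\tau_n$ and letting $n\to\infty$, the equality above forces $|\Omega\cap(B_r(p_{2,n})\setminus B_r(p_{1,n}))|/\tau_n\to 0$ as well, which then yields
$$
\liminf_{n}\frac{\big|\Omega\cap\big(B_r(p_{1,n})\Delta B_r(p_{2,n})\big)\big|}{\|p_{1,n}-p_{2,n}\|}=0,
$$
contradicting the hypothesis that $\Omega$ is \emph{not} $r$-degenerate (note $p_{i,n}\in\overline{\partial^*\Omega}$, so by the density of $\partial^*\Omega$ in its closure one may replace $p_{i,n}$ by nearby genuine essential-boundary points without affecting the limit).

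The step I expect to be the main obstacle is making rigorous the claim that $|\Omega\cap(B_r(p_{1,n})\setminus B_r(p_{2,n}))|=o(\tau_n)$, i.e. controlling the ``lower'' part of the lens: one must show that the only region where this set-difference has non-negligible thickness is a neighbourhood of the equatorial sphere $\partial B_r(p_{1,n})\cap\partial B_r(p_{2,n})$, and that within the slab $\{0\le\langle x-z_n,\nu\rangle\le o(1)\}$ allowed by $\Omega\subset H_0^+$ the relevant cross-sections of the lens shrink to zero measure relative to $\tau_n$. This is a geometric estimate on spherical caps entirely analogous to the slab estimates \eqref{f:schiscia}–\eqref{f:segment} in the proof of Proposition \ref{l:new}, so I would set it up the same way: introduce coordinates with $z_n$ at the origin and $\nu=e_d$, parametrise the lens by the $e_d$-variable, bound the $(d-1)$-dimensional slices using that the lens has width $O(\tau_n)$ away from the equator and the slab has height $\delta_0$ with $\mathcal H^{d-1}$ of the near-equatorial band small, and conclude by the same change-of-variables/Fubini computation. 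The remaining bookkeeping — extracting convergent subsequences, relabelling so that $t_{1,n}\le t_{2,n}$, and transferring the conclusion from $\overline{\partial^*\Omega}$ to $\partial^*\Omega$ — is routine.
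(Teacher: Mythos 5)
Your proposal is correct and takes essentially the same route as the paper: the same decomposition of $B_r(p_{1,n})\Delta B_r(p_{2,n})$ into an upper and a lower lune, and the same key geometric estimate that the lower lune intersected with $H_0^+$ has volume $o(\|p_{1,n}-p_{2,n}\|)$ (which the paper proves by exactly the cylinder-difference computation you anticipate). The only cosmetic difference is that you combine criticality with the geometric estimate to contradict nondegeneracy, whereas the paper combines nondegeneracy with the geometric estimate to contradict criticality --- an equivalent rearrangement of the same three ingredients.
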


%

\proof We argue by contradiction.
 Setting $t_{i, n} : =  {\rm dist} ( p  _{i,n} , H_0)$, we can assume up to a subsequence  that
$t _ {1, n} > t _{ 2 ,n }$ for every $n$.
We are going to show that
\begin{equation}\label{f:liminf} \liminf _{n \to + \infty} \frac{|\Omega \cap B _ r (p_{1,n}) | - |\Omega \cap B _ r (p_{2,n}) | }
 {t _{ 1 ,n}- t _{ 2 ,n} } >0 \ ,
\end{equation} 
against the fact that $\Omega$ is $r$-critical. 
We have
\begin{equation}\label{f:minus} |\Omega \cap B _ r (p_{1,n}) | - |\Omega \cap B _ r (p_{2,n}) | = |\Omega \cap (B _ r (p_{1,n})  \setminus B _ r (p_{2,n}) ) | - | \Omega \cap (B _ r (p_{2,n})  \setminus B _ r (p_{1,n})) | \,.
\end{equation}
Since $\Omega$ is  not $r$-degenerate, there exists a positive constant $C$ such that
\begin{equation}\label{f:plus} \frac{ |\Omega \cap (B _ r (p_{1,n})  \setminus B _ r (p_{2,n}))  | + | \Omega \cap (B _ r (p_{2,n})  \setminus B _ r (p_{1,n}) ) | }{t _{1, n } - t _ {2 ,n } }   \geq C \,.
\end{equation}
In view of \eqref{f:minus} and \eqref{f:plus},  the inequality \eqref{f:liminf} holds true provided
\begin{equation}\label{f:goal}
 \frac{ | \Omega \cap (B _ r (p_{2,n})  \setminus B _ r (p_{1,n})) |} {t _{ 1 ,n} - t _{ 2 ,n}} \leq \frac{C}{4} \,.
\end{equation} 
In turn, by the inclusion $\Omega \subset H _0 ^+$, the inequality \eqref{f:goal} is satisfied as soon as 
\begin{equation}\label{f:goal}
 \frac{ | H_0 ^+\cap (B _ r (p_{2,n})  \setminus B _ r (p_{1,n})) |} {t _{ 1 ,n} - t _{ 2 ,n}} \leq \frac{C}{4} \,.
\end{equation} 
Such inequality follows from elementary geometric arguments. Indeed,  for every fixed $n$, the set 
$H_0^+ \cap (B _ r (p_{2,n})  \setminus B _ r (p_{1,n}))$   has volume not larger  than the region $D_n$ obtained as the difference between two right cylinders having the same axis, 
given by the straight line orthogonal to $H _ 0$ through the common projection $z_n$ of $p  _{1,n}$ and $p  _{2,n}$  onto $H_0$, 
  the same height equal to $t_{2,n} + (1/2) ( t_{1,n}  - t _{2,n})$,  and as bases the $(d-1)$-dimensional balls contained into $H_0$, with center at  $z_n$
and radii respectively  equal to 
$( r ^ 2 - t_{2,n} ^ 2  ) ^ {1/2}$  and $ ( r ^ 2 - t_{1,n} ^ 2 ) ^ {1/2}$. 
Hence, to get \eqref{f:goal} it is enough to show that
${|D_n | } = o ( t _{ 1 ,n} - t _{ 2 ,n} )$.
This is readily checked since, 
setting $ \gamma _n :=(  t _{1,n} - t _{2,n} )/ 2$, we have
$$
\begin{array}{ll} |D_n |  & \displaystyle = \omega _{d-1} \big  (  ( r ^ 2 - t_{2,n} ^ 2  ) ^ {\frac{d-1}{2} }  - ( r ^ 2 - t_{1,n} ^ 2 ) ^ {\frac{d-1}{2} }   \big  ) \big (  t_{2,n} + \frac{1}{2} ( t_{1,n}  - t _{2,n})  \big ) \\ \noalign{\medskip} 
& \displaystyle \sim  2( d-1)   \omega _{d-1}  r ^ {d-3} 
( t_{2,n} \gamma_n +  \gamma _n ^ 2  ) \big (  t_{2,n} + \gamma _n  \big )
\,.  \end{array} $$
\qed
\bigskip

Assume the claim in Step 1 false. Then, at least one of the following assertions holds:
 
\begin{itemize}
\item[ (i)]   $ \exists \{t _n\}\to 0$ such that $\forall n$  $\Omega _{t_n}  \cup  \mathcal R _{t_n} $ is {\it not} Steiner symmetric about $H _ { t_n}$; 
 
 \smallskip
\item[(ii)]   $\exists \{t _n\} \to 0$ such that $\forall n$  $|\mathcal R _{t_n}  \setminus \Omega| >0$. 
  \end{itemize}
  
  \smallskip 
  In case (i), for every $n$ we can apply Proposition \ref{l:reflection}   with $H = H _ {t_n}$ and $\omega = \Omega ^- _{t_n}$ to infer that, for some $z_n \in H _ { t _n}$,  the set
   $\overline{\partial ^* \Omega  _{t_n} }  \cap  \{ z_n  + t \nu \ :\ t < 0  \}$ contains at least two distinct points.
Then $\overline{\partial ^* \Omega }\cap H _0^ + $ contains two sequences  of points $\{p _ {1 ,n}\}$, $\{p_{2,n}\}$  which for every $n$ are distinct, with the same projection onto $H _0$, and at infinitesimal distance from $H _ 0$ as $n \to + \infty$, against Lemma \ref{l:nopairs}. 

\smallskip
In case (ii), we may assume that  $\Omega _{t_n}  \cup  \mathcal R _{t_n} $ 
is Steiner symmetric about $H _ { t_n}$
For every $n$ let $y' _n \in \Omega^ {(0)} \cap \mathcal R _{t_n} ^ { (1)} $, and let  
$z _n$ be the orthogonal projection of  $y' _n$ on $H _ { t _n}$. 

If on the segment $(z_n, y' _n]$ there is some point in $\overline{\partial ^ * \Omega}$, 
since $\Omega _{t_n}  \cup  \mathcal R _{t_n} $  is Steiner symmetric about $H _ { t_n}$, 
we would have a pair  of distinct points belonging to $\overline{\partial ^* \Omega }\cap H _0^+ $, 
with the same projection on $H _0$, and infinitesimal distance from $H _0$, against Lemma \ref{l:nopairs}. 

If on the segment $(z_n, y' _n]$ there is no point in $\overline{\partial ^ * \Omega}$, 
invoking  \eqref{f:decompose} and recalling that $y' _n \in \Omega^ {(0)}$, we infer that the whole segment $(z_n, y '_n ]$
 is contained into  ${\rm int} (\Omega ^ { (0)})$. On the other hand, since $y' _n \in \mathcal R _{t_n} ^ { (1)}$, 
 denoting by $y _n$ the reflection of $y' _n$ about $H_{t_n}$, we have that the whole segment $[y_n, z_n)$ 
 is contained into ${\rm int} (\Omega ^ { (1)})$. We conclude that the point $z _n$ belongs to $\overline{\partial ^ * \Omega}$.  
Then, by arguing in the same way as in the last part of the proof of Proposition \ref{l:reflection}, 
it would be possible to find some straight line of the form $ \{ \widetilde z _n  + t \nu \, : \, t \in \R \}$, with $\widetilde z _n\in H _ {t_n}$ arbitrarily close to $z _n$, containing at least two points of $\overline{\partial ^ * \Omega}\cap (H _0) _+$.
Again, this would contradict Lemma \ref{l:nopairs}.

   \bigskip
\subsection{Proof of Step 2.}   
 Since $\Omega$ is bounded, we have $T < + \infty$. Then 
the proof of Step 2 is obtained by showing the following claims:
\smallskip

$\bullet$ {\it Claim 2a.  Symmetric inclusion  holds at $T$
 with away contact or with close contact.}
{\smallskip}

$\bullet$  {\it Claim 2b. Symmetric inclusion cannot hold with close contact and no away contact.} 

\bigskip
{\it Proof of Claim 2a}. 
Symmetric inclusion clearly continues to hold at $T$. 
 Moreover, by definition of $T$, at least one of the following assertions is true:

 \smallskip
 \begin{itemize}
\item [(i)] $\exists \{ t_n \} \to T ^+$ such that  $\forall n$ symmetric inclusion with away contact holds at $t _n$;  

\smallskip
 \item [(ii)] $\exists \{ t_n \} \to T ^+$ such that $\forall n$ symmetric inclusion does not hold at $t _n$.  
  \end{itemize}

 \medskip
In case (i),  for every $n$ there exists an away contact point at $t _n$, namely a point 
 $p'_n \in  \big [ \overline {\partial ^* \mathcal R _ {t_n} } \cap \overline {\partial ^*  \Omega}\big ] \setminus H _{ t_n}$. 
 Up to a subsequence, denote by $p'$ the limit of $p'_n$. Two cases may occur. If 
 $p' \not \in H _ T$, then  $p'$ is an away contact point at $T$. 
 If  $p' \in H _ T$,  
 denoting by $p _n$ the symmetric of $p '_n$ about $H _ { t_n}$, taking $q_{1,n} = p _n$ and $q _{2,n} = p' _n$ in 
 \eqref{f:touching2}, we see that $p'$ is a close contact point.

 \smallskip
To deal with case (ii), we point out the validity of the following 

  \smallskip
 {\it Away inclusion property: If symmetric inclusion occurs without away contact at $T$, for every $\delta >0$, there exists $s _\delta >0$ such that, for every $s \in [0, s _\delta]$ the set
$$
 U _{T- \delta} ^ s :=   \Big \{ x + ( 2 \delta + 2 s ) \nu \ :\ x \in \mathcal R _{T- \delta}  \Big \} 
$$
 is contained into $\Omega$. }

 \smallskip
 The away inclusion property can be easily proved by contradiction. If it was false, we could find an infinitesimal sequence $\{s_n\}$ of positive numbers, and a sequence  of points $\{x'_n\}$  of density $1$ for $U _{T- \delta} ^ {s_n}$  but of density $0$ for $\Omega$. Up to a subsequence, there exists $x' := \lim _n x' _n$. 
 By construction, we have  
 $x' \in \{ x + 2 \delta \nu : x \in \overline { \partial ^ *  \mathcal R _{T- \delta}} \}  \subset  \overline { \partial ^ * \mathcal R _T }$. 
But, since we are assuming that symmetric inclusion occurs without away contact at $T$, it is readily checked that
 $\overline { \partial ^ *  \mathcal R _T} \subseteq {\rm int} (\Omega ^ { (1)} )$. 
 Then $x' \in  {\rm int} (\Omega ^ { (1)} )$, against the fact that $x' _n$ are points of density $0$ for $\Omega$. 
 
 \smallskip

  Now, going back to case (ii), we can assume that symmetric inclusion occurs at $T$ without away contact (otherwise Claim 2a.\ holds for free). 
  Then, in view of the away inclusion property, the failure
of symmetric inclusion at $t _n$ implies that, for every $n$ and every $\delta>0$,
  there exist at least two distinct points in $\overline{ \partial ^* \Omega}$, say $q_{1, \delta, n}$ and $q _{2, \delta, n}$,  
   which have the same orthogonal projection onto $H _ T$ and have distance less than $\delta$ from $H _ T$. 
By the arbitrariness of $\delta >0$, we can choose a diagonal sequence, 
and passing to the limit we get a close contact point at $T$ according to definition \eqref{f:touching2}. 

 \medskip 
 {\it Proof of Claim 2b.} 
Assume by contradiction that symmetric inclusion holds at $T$ with close contact. We are going to  contradict \eqref{f:hyp2} by showing that, if
$\{q_{1,n}\}$ and
  $\{q _{2,n}\}$ are sequences converging to a point $q\in H _ T$ as in \eqref{f:touching2}, it holds 
\begin{equation}\label{f:liminf2} \liminf _{n \to + \infty} \frac{|\Omega \cap B _ r (q_{1,n}) | - |\Omega \cap B _ r (q_{2,n}) | }
{ \| q _{ 1 ,n}- q _{ 2 ,n} \|}   >0 \,.
\end{equation}

We have 
\begin{equation}\label{f:differenza} 
\begin{array}{ll}
& |\Omega \cap B _ r (q_{1,n}) | - |\Omega \cap B _ r (q_{2,n}) | = \\ \noalign{\bigskip}
 & |\Omega \cap (B _ r (q_{1,n})  \setminus B _ r (q_{2,n}) ) | - | \Omega \cap (B _ r (q_{2,n})  \setminus B _ r (q_{1,n})) | \,.
\end{array}
\end{equation}

In order to estimate the two terms at the r.h.s.\ of \eqref{f:differenza}, we fix $\delta >0$ (to be chosen later), and  we let $s _n >0$ be such that $H _ { T + s_n  }$ contains the midpoint of the segment $(q_{1,n}, q_{2,n})$. Up to working with $n$ large enough, since $q_{1,n}$ and  $q_{2,n}$ converge to a point of $H _ T$, thanks to the away inclusion property we can assume that  \begin{equation}\label{f:contenimento} 
 U _{T- \delta} ^ {s_n}  \subset \Omega \,.
 \end{equation}
 Hence,
\begin{equation}\label{f:destra} 
\begin{array}{ll}  
| \Omega \cap (B _ r (q_{1,n})  \setminus B _ r (q_{2,n})) |   & =  
 | U _{T- \delta} ^ { s _n} \cap (B _ r (q_{1,n})  \setminus B _ r (q_{2 ,n})) |   \\ \noalign{\bigskip} 
 & + \ | ( \Omega \setminus  U _{T- \delta} ^ { s _n})  \cap (B _ r (q_{1,n})  \setminus B _ r (q_{2 ,n})) |\,. \end{array}
\end{equation}
On the other hand, 
\begin{equation}\label{f:sinistra} \begin{array}{ll} 
| \Omega \cap (B _ r (q_{2,n})  \setminus B _ r (q_{1,n})) |  \displaystyle & =   | \Omega _{T - \delta}  \cap (B _ r (q_{2,n})  \setminus B _ r (q_{1,n})) |    \\ \noalign{\bigskip} 
& +\  | \Omega \cap  (     H _{T+ s_n} \oplus B _ {\delta + s _n}  ) \cap (B _ r (q_{2,n})  \setminus B _ r (q_{1,n}) ) |
 | 
\end{array}
\end{equation}
Here and below, we denote by $H \oplus B _ R (0)$ the strip given by points of $\R ^d$ with distance less than $R$ from a hyperplane $H$.

The two sets
$  \Omega _{T - \delta}  \cap (B _ r (q_{2,n})  \setminus B _ r (q_{1,n}))$ and 
$U _{T- \delta} ^ { s _n} \cap (B _ r (q_{1,n})  \setminus B _ r (q_{2 ,n}))$
have the same measure as they are symmetric about the hyperplane $H _ { T + s_n}$.
Therefore, by subtracting \eqref{f:sinistra} from \eqref{f:destra}, and recalling \eqref{f:differenza}, we obtain
\begin{equation}\label{f:decomponi} 
  |\Omega \cap B _ r (q_{1,n}) | -  |\Omega \cap B _ r (q_{2,n}) | = I _n - J _n\, , 
  \end{equation}
with 
$$\begin{cases} 
I _n  := 
   |( \Omega \setminus  U _{T- \delta} ^ { s _n})  \cap (B _ r (q_{1,n})  \setminus B _ r (q_{2 ,n})) |  & 
   \\  \noalign{\bigskip}
   J _n := 
    | \Omega \cap  (     H _{T+ s_n} \oplus B _ {\delta + s _n}  ) \cap (B _ r (q_{2,n})  \setminus B _ r (q_{1,n}) ) |   \,. &
    \end{cases}
$$ We  are now going to estimate $J _n$ from above and $I _n$ from below. We
have 
 $$ J _n \leq  |     (H _{T+ s_n} \oplus B _ {\delta + s _n}  ) \cap (B _ r (q_{2,n})  \setminus B _ r (q_{1,n}) |\, . $$
In turn, the right hand side of the above inequality does not exceed the measure of the region $D _n$ obtained as the difference 
between two right cylinders having both
as axis the straight line containing $q_{1,n}$ and $q _{2,n}$,  as height    $\delta + s _n$, and 
as bases the $(d-1)$-dimensional balls obtained as intersecting $H_{T- \delta}$ respectively with $B _ r (q_{2,n}) $ and 
$B _ r (q_{1,n}) $. 
The measure of such region $D _n$ satisfies ({\it cf.}\ the proof of Lemma \ref{l:nopairs})
$$
 |D_n | \sim  2( d-1)   \omega _{d-1}  r ^ {d-3} 
( t_{2,n} \gamma_n +  \gamma _n ^ 2  ) \big (  t_{2,n} + \gamma _n  \big )
\,, 
$$ where
$\gamma _n$ is the distance of  $q_{1,n}$ and  $q_{2,n}$ from $H _ { T+ s_n}$ (or equivalently, $2 \gamma_n$ is the distance between $q_{1,n}$ and  $q_{2,n}$), and 
$t _ { 2, n}= \delta + s _n - \gamma _n$ is the distance of $q_{2,n}$ from $H _ {T - \delta}$. We infer that
\begin{equation}\label{f:eff1} J_n  
 \leq 8( d-1)   \omega _{d-1}   r ^ {d-3}  \delta ^ 2  \gamma_n  \,,
\end{equation}
where the last inequality holds because $s _n \leq \delta$ for $n$ large enough.

\smallskip
We now turn to estimate $I _n$.   

 Let us begin by proving that:  
\begin{eqnarray} 
 \exists \, \delta _0 >0 \ :\  \inf _n | \Omega \cap H _{T+ \delta_0 + 2 s_n} ^+   \cap (B _ r (q_{1,n})  \setminus B _ r (q_{2 ,n})) |   >0 \,, \  \hskip .1 cm  &\label{f:sguscia}  \\ \noalign{\medskip}
 \exists\,    \eta  >0 \ :\   \inf _n {\rm dist}\,  (U ^ {s_n} _ {T -   \delta_0   }\, ,  \ \overline {\partial ^ * \Omega }  \cap (B _ r (q_{1,n})  \setminus B _ r (q_{2 ,n}))  \geq    \eta    \,,& \label{f:distanziamento}
 \end{eqnarray}  
where stands for the distance in the halfspace $H _{T+ \delta_0 + 2 s_n} ^+$.   

If  the infimum in \eqref{f:sguscia} is zero,  by virtue of \eqref{f:contenimento} we obtain 
$$ 
\inf _n \frac
{\big |\Omega \cap (H _ { T + {s_n}}  \oplus B _{\delta _0 + s_n } (0) ) \cap ( B _ r (q_{1,n}) \Delta B _ r (q_{2,n})) 
\big | }{\| q_{1,n} -q_{2,n} \| } = \inf _n \frac{\big |\Omega \cap ( B _ r (q_{1,n}) \Delta B _ r (q_{2,n})) \big |  }{\| q_{1,n} -q_{2,n} \| }   \, . 
$$  
This is not possible provided  $\delta _0$ is small enough, because 
the left hand side of the above equality is infinitesimal as $\delta_0 \to 0^+$,  while the right hand side is 
controlled from below by a positive constant thanks to the   nondegeneracy assumption.

If \eqref{f:distanziamento} was false  we could find 
a sequence  $\{ x_{n} \} \subset  \overline {\partial ^ * \Omega }  \cap (B _ r (q_{1,n})  \setminus B _ r (q_{2 ,n}))$ such that
$\lim _n {\rm dist}\,  (U ^ {s_n} _ {T - \delta_0}\, ,  x_n) = 0$.  This is not possible because, up to a subsequence, the limit of  $\{x_{n}\}$ would provide  an away contact point at $T$, against our assumption.  
 
Now,  thanks to  \eqref{f:distanziamento}, we   can  consider a geodesic curve minimizing the distance 
between $U ^ {s_n} _ {T - \delta_0}$ and $\overline {\partial ^ * \Omega }  \cap (B _ r (q_{1,n})  \setminus B _ r (q_{2 ,n})$ 
inside $ \big (B _ r (q_{1,n})  \setminus B _ r (q_{2 ,n})\big )\cap H _{T+ \delta_0 + 2 s_n} ^+.   $ We take the mid-point, say  $y_n$, and we consider the ball  
 $B_{ \frac \eta 4  }(y_n)$.   The set
 $ B_{\frac \eta 4}(y_n)   \cap (B _ r (q_{1,n})  \setminus B _ r (q_{2 ,n}))$
 does not intersect $U ^ {s_n} _ {T - \delta_0}$ and,   by \eqref{f:contenimento} and \eqref{f:sguscia}, it  is contained into $\Omega \cap (B _ r (q_{1,n})  \setminus B _ r (q_{2 ,n}))$.   After noticing that this construction continues to work for all $\delta <\delta _0$,   we infer that
there exists a positive constant $K$ such that 
\begin{equation}\label{f:eff2}
I _n \geq  K  \gamma _n\,.
\end{equation} 

By \eqref{f:decomponi}, \eqref{f:eff1}, and \eqref{f:eff2}  we conclude that that, up to taking  $\delta_0$ smaller,  
it holds 
$$\liminf _{n \to + \infty} \frac{|\Omega \cap B _ r (q_{1,n}) | - |\Omega \cap B _ r (q_{2,n}) | }
{ \| q _{ 1 ,n}- q _{ 2 ,n} \|}   >\frac{3K }{4}  \,.
$$
Thus \eqref{f:liminf2} holds true and the proof of Step 2 is achieved.

 \bigskip
 
\subsection{Proof of Step 3.}  By Step 2, we know that at $t = T$ symmetric inclusion occurs with away contact.  
The proof of Step 3 is obtained by showing the following claims:

\smallskip
$\bullet$ {\it Claim 3a.} 
{\it  If $p'$ is an away contact point and 
 $p$ is  its symmetric about about $H _ T$, 
\begin{eqnarray}&  \big | \big [ B _ r (p') \setminus B _ r (p)  \big ] \cap \big [ \Omega \setminus  \reflexT \big ] \big | = 0 \, , \text{ and hence } |(B_r(p') \setminus B_r(p) ) \cap  \Om ^{ns}| =0  ; & \label{f:diff-balls} 
 \\ \noalign{\smallskip} 
 &\exists \e >0 \ :\ \big | B _ \e (p' ) \cap (\Omega \setminus \mathcal R _ T) \big |  = 0 \,,  \text{ and hence $\Omega ^ s$ is open.} & \label{f:claim}
 \end{eqnarray} 
 }
%
$\bullet$ {\it Claim 3b. Properties \eqref{f:cc} and \eqref{f:studiobordi}} hold.

\bigskip 
{\it Proof of Claim 3a.} We have:
$$\begin{array}{ll} 
& \big | \big [ B _ r (p') \setminus B _ r (p)  \big ] \cap \big ( \Omega \setminus  \reflexT  \big ) \big |  
\\
\noalign{\bigskip} 
\displaystyle 
= & \big |  B _ r (p') \cap \big [ \Omega  \setminus (  \Omega _T  \cup  \reflexT ) \big ] \big | - 
\big |  B _ r (p) \cap \big [ \Omega \setminus (  \Omega _T  \cup \reflexT )  \big ]  \big |
\\ \noalign{\bigskip} 
= & 
 \big |  B _ r (p') \cap  \Omega \big | - \big |  B _ r (p') \cap \big (  \Omega _T \cup  \reflexT \big ) \big |   -
\big |  B _ r (p) \cap \Omega \big | + \big |  B _ r (p) \cap \big ( \Omega _T  \cup \reflexT \big ) \big |  = 0
\,,\end{array}
 $$   
 where the first equality holds since  $ B _ r (p') \setminus B _ r (p) $ does not intersect $\Omega _ T$, while in the last one we have used 
 $r$-criticality and the fact that 
 the two sets  $B _ r (p') \cap ( \Omega _T \cup \reflexT )$ and 
 $B _ r (p) \cap (  \Omega _T  \cup  \reflexT ) $  are reflected of each other about $H_T$. 
We have thus proved \eqref{f:diff-balls}.

\smallskip
In view of \eqref{f:diff-balls}, the equality \eqref{f:claim} is immediate in case $p' \not \in \overline {B _ r }(p)$. 
Therefore, we may prove it having in mind that $p' \in  \overline {B _ r } (p)$. 
\smallskip We claim that
\begin{equation}\label{f:case3} 
0 < \big | \big [ B _ r (p' ) \setminus {B _ r (p)} \big ] \cap \Omega \big | < \big |  B _ r (p' ) \setminus {B _ r (p)} \big | \,.
\end{equation}
Indeed, let us exclude both the equalities
$$ \big | \big [ B _ r (p' ) \setminus  {B _ r (p)} \big ]  \cap \Omega \big  | = 0 \quad \text{ and } \quad  \big | \big [ B _ r (p' ) \setminus {B _ r (p)} \big ] \cap \Omega \big | = \big | B _ r (p' ) \setminus  {B _ r (p)}  \big |\,.
$$
The former cannot hold  since $\Omega$ is not $r$-degenerate.   The latter, in view of  \eqref{f:diff-balls}, would imply 
that   $B _ r (p' ) \setminus  {B _ r (p)} $ is contained into $\reflexT$, and hence $B _ r (p ) \setminus  {B _ r (p')} $ is contained into $\Omega _T$.   
Since  $\Omega _T  \cup \reflexT $  is Steiner-symmetric 
about  $H _T$, this would give (via Fubini Theorem) that $p$ and $p'$  belong to ${\rm int} (\Omega ^ { (1)})$, contradicting the fact that  they belong to  $\overline {\partial ^* \Omega}$. 

\smallskip
As a consequence of \eqref{f:case3}, we observe that
\begin{equation}\label{f:brexit}  \exists y' \in \big [ B_r (p') \setminus \overline{B _ r } (p) \big ] \cap \overline{\partial ^* \Omega} \,. \end{equation}   
Indeed, if  \eqref{f:brexit} was false, $B_r (p') \setminus \overline{B _ r}  (p)$ would be contained either into ${\rm int}  ( \Omega ^ { (1)} ) $ or into ${\rm int}(\Omega ^ { (0)})$, against \eqref{f:case3}. 
Next we observe that, in view of \eqref{f:diff-balls}, the two sets $\Omega$  and $\reflexT$ have the same density at  every point of
$B_r (p') \setminus \overline{B _ r}  (p)$, and hence 
$$\big [ B_r (p' ) \setminus \overline{ B _ r}  (p) \big ]  \cap   \partial ^  * \Omega = \big [ B_r (p' ) \setminus \overline{ B _ r } (p) \big ] \cap \partial ^* \reflexT \,;
$$
consequently, since the set $B_r (p') \setminus \overline{B _ r (p)}$ is open, we have 
\begin{equation}\label{f:same-closure-bdry} 
 \big [ B_r (p') \setminus \overline{B _ r}  (p) \big ] \cap \overline {\partial ^* \Omega}  = \big [ B_r (p') \setminus \overline{B _ r} (p) \big ] \cap \overline {\partial ^* \reflexT}  \,.
\end{equation}

By \eqref{f:brexit} and \eqref{f:same-closure-bdry}, it turns out that $y'$ is itself an away contact point. 
Therefore, denoting by $y$ its symmetric about $H _ T$, 
in the same way as we obtained \eqref{f:diff-balls}, replacing the pair $p, p'$ by the pair $y, y'$, we get
 \begin{equation}\label{f:diff-balls2} 
\big | \big [ B _ r (y') \setminus B _ r (y)  \big ] \cap \big ( \Omega \setminus \reflexT \big ) \big | = 0\,. 
\end{equation} 
Moreover, 
since the set $ B_r (p') \setminus \overline{B _ r } (p)$ is open, for $\e>0$ sufficiently small the ball  $B _ \e ( y')$ is contained into $B_r (p') \setminus \overline{B _ r } (p)$, and hence
\begin{equation}\label{f:specchio}
\exists \e >0 \ :\ B _\e (p') \subset  \big [ B_r (y') \setminus \overline{B _ r} (y) \big ]\,.
\end{equation} 
 By \eqref{f:diff-balls2} and \eqref{f:specchio}, \eqref{f:claim} is proved.  
%

\bigskip {\it Proof of Claim 3b.}  In order to prove \eqref{f:cc}-\eqref{f:studiobordi}, we consider the subsets of $H _ T$ defined by  
$$ \begin{array}{ll}
&
C_T: =\Big \{ m_{ ( p, p') } \ :\  p' \text{ is an away contact point, \ $p$ is its symmetric about $H _ T$} \Big \} \, , 
\\  \noalign{\medskip}
& A _ T :=  \Big \{z\in H_T \ : \ g(z) <0 \Big \}\,, 
\end{array}
$$
where $m _{( p, p')} \in H_T$ denotes the mid-point of the segment $(p,p')$, and
the function $g$ is defined as in Proposition \ref{l:reflection} (iv) (applied with $\omega : = \Omega _ T$ and $H := H _ T$).  

By Proposition \ref{l:reflection} (iv), we know that $g$ is continuous and hence the set  $A_T$ turns out to be open.  
Since Claim 3a.\ implies that $C_T$ 
is  open in $H_T$,   we infer that $C_T$ is  relatively open in $A_T$. On the other hand, since $\overline{ \partial ^* \Omega}$ is a closed set, it easy to check that $C_T$  is also relatively closed in $A_T$. Hence,  $C_T$ consists in a non-empty union of connected components of $A_T$. 
Accordingly, $\overline{\partial ^ * \Omega^ s} \cap (H _ T ^ -  \setminus H _ T)$ (resp., $\overline{\partial ^ * \Omega^ s} \cap (H _ T ^ +  \setminus H _ T))$ 
 is a union of connected sets, which are the images of the open connected components of $C_T$ through the continuous function $g$ (resp., the reflections of such images about $H _ T$). 
Each of these connected sets corresponds to $\overline{\partial ^ * \Omega_i^ s} \cap (H _ T ^ -  \setminus H _ T) $ 
(resp., $\overline{\partial ^ * \Omega_i^ s} \cap (H _ T ^ +  \setminus H _ T) $ ) 
for some open connected component $\Omega ^ s _ i $ of $\Omega ^ s$.  This proves \eqref{f:cc}. Since by \eqref{f:claim} none of the sets $\overline{\partial ^ * \Omega_i^ s} \cap (H _ T ^ {\pm}  \setminus H _ T)  $ can 
 intersect $\overline {\partial ^ * \Omega ^ { ns}}$,   \eqref{f:studiobordi} follows.

 \subsection{Proof of Step 4.}  
 Relying on decomposition $\Omega = \Omega ^ s \sqcup \Omega ^ { ns}$ made in Step 3, we are going to 
analyze in detail the behaviour of the open connected components $\Omega ^ s   _ i$ of $\Omega ^ s $. 
To that aim, we need to set up some additional definitions and  notation. 
  
  \smallskip 
Given two two different open connected components $\Om_i^{s} , \Om_j ^ {s}$ of   $\Om^s$,  we say that $\Om^s_i$ {\it is in   $r$-contact with $ \Om_j ^ s$} if   there exists an away contact point $p' \in \overline{\partial ^* \Om^s _i} \setminus H_T$  such that, denoting by $p$ its symmetric about $H _ T$, it holds
$$  
\big |\big ( B_r(p)\Delta B_r(p')\big ) \cap \Om^s _{j} \big |>0.
$$ 
It is not difficult to check that,   if $\Om^s_i$ is in   $r$-contact with $ \Om_j ^ s$,  $\Om^s_j$  is in   $r$-contact with $ \Om_i^ s$.  

If $\Omega _ i ^ s$ is not in contact with  any other component of $\Omega ^ s$, we say that $\Omega_ i ^ s$ is {\it $r$-isolated}.   

 Since our strategy will require to let the initial hyperplane vary, we will write
$$\Omega = \Omega ^{\nu, s} \sqcup \Omega ^ {\nu, ns} \,,
$$ where the additional superscript $\nu$ indicates the direction of the parallel movement, namely the normal to the initial hyperplane $H_0$ (and the decomposition is always meant with respect  to the parallel hyperplane $H _ T$ at the stopping time $T$ defined in Step 2). 
  
  \smallskip
  The proof of Step 4 is achieved by showing the following claims: 
   
  \smallskip 
  
  $\bullet$ 
{\it Claim 4a.  Given $\nu \in \mathbb S ^ { d-1}$,  let $\Omega _ \flat$ be a  $r$-isolated open connected component  of $\Omega ^ {\nu, s}$.
Then $\Omega _ \flat$ is a ball of radius at least $r/2$,  and  $\Omega \setminus \Omega _\flat$ is $r$-critical and  not $r$-degenerate, unless it has measure zero. }
      
   \smallskip
$\bullet$ {\it Claim 4b. The following family is empty: } 
  $$\mathcal F: = \bigcup _{\nu \in \mathbb S^ { d-1}}  \Big \{ \text{\it open connected components not $r$-isolated of }  \Omega ^ {\nu, s}  \Big \}\,. $$   

\smallskip
$\bullet$ {\it Claim 4c (conclusion).}  {\it $\Omega$ is equivalent to a finite union of balls of radius $R> r/2$, at mutual distance larger than or equal to $r$. } 
 
 \bigskip

 {\it Proof of claim 4a}.  Given $\nu \in \mathbb S ^ { d-1}$, let $\Omega _ \flat$ be a  $r$-isolated open connected component  of $\Omega ^ {\nu, s}$. 
Assume by a moment to know that 
\begin{equation}\label{f:grasse}
\Omega _ \flat \text{  is  $r$-critical and  not $r$-degenerate.}   
\end{equation}
In this case, we can restart our proof, with $\Omega _\flat$ in place of $\Omega$. 
Given an arbitrary direction $\widetilde \nu \in \mathbb S ^ { d-1}$,  we make the decomposition 
$$\Omega _\flat = \Omega _\flat  ^ {\tilde \nu, s} \sqcup \Omega _\flat  ^ {\tilde \nu, ns}\,.$$
  We are going to show that, unless $\Omega _\flat ^ {\tilde \nu, ns}$ is empty,  this decomposition 
splits $\Omega _\flat$ into two open sets, contradicting the connectedness of $\Omega  _\flat$.
Hence   $\Omega _\flat$ is Steiner symmetric about a hyperplane with unit normal $\widetilde \nu$.  
By the arbitrariness of $\widetilde \nu$, we deduce that $\Omega _\flat$ is 
a ball.  (Indeed,  
since $\overline \Omega _\flat$ is  a compact set,  following \cite{V13},  there exists a sequence of Steiner symmetrizations  of it converging to a ball; but since $ \Omega _\flat $  is already Steiner symmetric in every direction, it must coincide with such ball). 
  Since $\Omega _\flat$ is not $r$-degenerate,  the radius of the ball is strictly larger than $r/2$.

Assuming that $\Omega  _\flat ^ {\widetilde \nu, ns}$ is not empty, let us show that 
every point of $\Omega _\flat$ is in the interior of one among the two sets  $\Omega ^ {\widetilde \nu , s} _\flat$ and  $\Omega ^ {\widetilde \nu, ns }  _\flat$.      
  Let us denote by $\widetilde T$ the stopping time defined as in Step 2 for the parallel movement with normal $\widetilde \nu$. Recall from \eqref{f:studiobordi} that 
\begin{equation}\label{f:inclusion-flat}
\overline{\partial ^* \Om_\flat  ^{\widetilde \nu, s}} \cap  \overline{\partial ^* \Om_\flat  ^{\widetilde \nu, ns}} \sq H_{\widetilde T} \,, 
\end{equation}

Let us consider separately the cases when $x \in \Omega _\flat \setminus  H _{\widetilde T}$ and when $x \in \Omega _\flat \cap  H _{\widetilde T}$. 

Let $x \in \Omega _\flat \setminus \widetilde H$. Since $\Omega_\flat $ is open, there exists a ball $B _\e ( x)$ contained into $\Omega_\flat \setminus \widetilde H$. 
It cannot be $0< |\Omega _\flat ^{\widetilde \nu,  s} \cap B _ \e (x) | < |B _\e (x)|$. Otherwise, by Federer's Theorem, $B _\e ( x)  $ would contain points of $\partial ^* \Omega _\flat ^ {\widetilde \nu,  s} \cap \partial ^ * \Omega _\flat ^ {\widetilde \nu,  ns}$, against \eqref{f:inclusion-flat}. 
We deduce that $B _ \e (x)$ is contained either into $\Omega ^ {\widetilde \nu, s}  _\flat$ or into  $\Omega ^ {\widetilde \nu, ns }  _\flat$, namely 
$x$ is an interior point for one among $\Omega ^{\widetilde \nu,  s } _\flat$ and  $\Omega ^ {\widetilde \nu, ns }  _\flat$.    
         
Let now $x \in \Omega _\flat \cap \widetilde H$, and let $B _\e ( x)$ be a ball contained into $\Omega_\flat$. By the same arguments as above, each of the two sets $B _ \e (x) \cap (H _{\widetilde T} ^- \setminus H_{\widetilde T} )$ 
and $B _ \e (x) \cap (H _{\widetilde T} ^+ \setminus  H _{\widetilde T} )$  
must be entirely contained either into $\Omega ^{\widetilde \nu,  s} _\flat$ or into  $\Omega ^ {\widetilde \nu, ns }  _\flat$.  Recalling that $\Omega _\flat ^ {\widetilde \nu, s}$ is  Steiner symmetric about $ H _{\widetilde T}$, we infer that either both sets are contained into $\Omega ^ {\widetilde\nu, s} _\flat$,  or both sets are contained into $\Omega ^ {\widetilde \nu, ns} _\flat$. Then, also in this case $x$ is an interior point for one among $\Omega ^{\widetilde \nu,  s } _\flat$ and  $\Omega ^ {\widetilde \nu, ns }  _\flat$.

\bigskip

  
  \smallskip
  To conclude the proof of Claim 4a., it remains to show that \eqref{f:grasse} holds true and that the same property is valid for $\Omega \setminus \Omega _\flat$, unless it has measure zero. For the sake of clearness, this will be obtained as the final product of 
  three consecutive lemmas.

    \begin{lemma}\label{bfm03}
Given $\nu \in \mathbb S ^ { d-1}$, let $\Omega _ \flat$ be a  $r$-isolated open connected component  of $\Omega ^ {\nu, s}$. Then  
 $$\inf_{x_1,x_2 \in \partial ^* \Om_\flat} \frac{|\Om^{\nu, s}\cap \big (B_r(x_1) \Delta B_r(x_2)\big )|}{\|x_1-x_2\|} >0.$$
   \end{lemma}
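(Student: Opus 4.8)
The plan is to reduce this to the nondegeneracy of $\Omega_\flat$ itself, which I expect to be available from a companion lemma (the statement says ``three consecutive lemmas''), and then to control the difference between measuring intersections of balls with $\Omega_\flat$ and with the larger set $\Omega^{\nu,s}$. First I would observe that $\Omega_\flat$ is an open connected set of finite measure, hence its diameter is well-defined, and since it is a bounded open connected component it is not $r$-degenerate for $r$ smaller than its diameter by Proposition \ref{l:fat}; the point is to upgrade this to a quantitative lower bound on the ratio involving $\Omega^{\nu,s}$ rather than $\Omega_\flat$. So the key inequality to establish is that for $x_1,x_2\in\partial^*\Omega_\flat$,
$$
\big|\Omega^{\nu,s}\cap(B_r(x_1)\,\Delta\,B_r(x_2))\big|\ \geq\ \big|\Omega_\flat\cap(B_r(x_1)\,\Delta\,B_r(x_2))\big|,
$$
which is immediate from $\Omega_\flat\subseteq\Omega^{\nu,s}$ since the integrand is nonnegative. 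Combined with nondegeneracy of $\Omega_\flat$ this already gives the claim \emph{provided} $r<\mathrm{diam}(\Omega_\flat)$.

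The subtlety I anticipate is therefore the regime $r\geq\mathrm{diam}(\Omega_\flat)$, which is not excluded a priori at this stage of the argument (we only know the final radius $R>r/2$ at the end of Step 4). Here $\Omega_\flat$ by itself could indeed be $r$-degenerate — a small set has both points $x_1,x_2$ for which $B_r(x_1)\Delta B_r(x_2)$ barely meets $\Omega_\flat$. The resolution should use the $r$-contact structure: since $\Omega_\flat$ is $r$-\emph{isolated}, for any away contact point $p'\in\overline{\partial^*\Omega_\flat}\setminus H_T$ with symmetric point $p$, the set $(B_r(p)\Delta B_r(p'))$ meets no other component $\Omega^s_j$, and by \eqref{f:diff-balls} (Claim 3a) we have good control of $\Omega\setminus\mathcal R_T$ on $B_r(p')\setminus B_r(p)$; in particular \eqref{f:case3} gives a \emph{two-sided} estimate $0<|(B_r(p')\setminus B_r(p))\cap\Omega|<|B_r(p')\setminus B_r(p)|$, so $B_r(p')\setminus B_r(p)$ genuinely straddles $\partial^*\Omega$. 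The plan is to use this to produce, for each pair $x_1,x_2\in\partial^*\Omega_\flat$ close together, a fixed chunk of $\Omega^{\nu,s}$ (not necessarily inside $\Omega_\flat$) inside $B_r(x_1)\Delta B_r(x_2)$ of measure comparable to $\|x_1-x_2\|$, reproducing the ``cylinder'' construction already carried out in the proof of Proposition \ref{l:new} and in Step 2.

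Concretely, I would argue by contradiction: suppose sequences $x_{1,n},x_{2,n}\in\partial^*\Omega_\flat$ with $\|x_{1,n}-x_{2,n}\|\to 0$ and the ratio tending to $0$. Passing to a subsequence, both converge to a common point $\bar x\in\overline{\partial^*\Omega_\flat}$. Now apply the measure-theoretic sufficient condition of Proposition \ref{l:new}, but with $\Omega^{\nu,s}$ in the numerator: it suffices to show $\inf_{x\in\mathcal U_\varepsilon(\overline{\partial^*\Omega_\flat})}|D\chi_{B_r(x)}|\big((\Omega^{\nu,s})^{(1)}\big)>0$ for some $\varepsilon>0$, i.e.\ that a sphere of radius $r$ centred near $\partial^*\Omega_\flat$ always captures a nonnegligible part of the points of density one of $\Omega^{\nu,s}$. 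If this failed, one would get a sequence of centres $x_n\to\bar x\in\overline{\partial^*\Omega_\flat}$ with $\mathcal H^{d-1}(\partial B_r(x_n)\cap(\Omega^{\nu,s})^{(1)})\to 0$; but $\Omega_\flat$ is open and connected with $\mathrm{diam}(\Omega_\flat)>$ (something we can assume, since otherwise $\Omega_\flat$ being $r$-isolated and small would contradict the contact geometry of Claim 3a as above), and then the sphere $\partial B_r(x_n)$ must cross $\Omega_\flat$ in a set of $\mathcal H^{d-1}$-measure bounded below, exactly as in the proof of Proposition \ref{l:fat}. This contradiction, together with the segment estimate \eqref{f:segment}--\eqref{f:schiscia} from the proof of Proposition \ref{l:new} applied verbatim with $\Omega$ replaced by $\Omega^{\nu,s}$, yields the desired strictly positive infimum and completes the proof. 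The main obstacle, to reiterate, is handling the possibility that $r$ is not below the diameter of $\Omega_\flat$, which is precisely where the $r$-isolation hypothesis and Claim 3a must be invoked rather than a naive application of Proposition \ref{l:fat}.
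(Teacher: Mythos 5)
Your proposal takes a different route from the paper's, and it has a genuine gap. The paper does \emph{not} deduce the lemma from nondegeneracy of $\Omega_\flat$; on the contrary, the nondegeneracy and $r$-criticality of $\Omega_\flat$ are established only afterwards (Lemma \ref{l:remove}), \emph{using} Lemma \ref{bfm03}. So the ``companion lemma'' you hope to lean on sits downstream of the statement you are proving, and invoking it here is circular. Nor can you substitute Proposition \ref{l:fat} or the sphere condition \eqref{f:totvar} of Proposition \ref{l:new} applied to $\Omega_\flat$ or to $\Omega^{\nu,s}$: at this stage $\Omega_\flat$ is only known to be a bounded open connected set, Steiner symmetric about $H_T$ in the direction $\nu$, and for such sets with diameter between $r$ and $2r$ (think of a thin lens $\{\|x'\|<\rho,\ |x^d|<\delta\}$ with $r/2<\rho<r$ and $\delta$ small, symmetric about $H_T=\{x^d=0\}$) the sphere $\partial B_r(x)$ centred at a point of the flat part of the boundary can miss the set entirely, so the infimum in \eqref{f:totvar} vanishes and the argument ``exactly as in the proof of Proposition \ref{l:fat}'' does not go through. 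Knowing merely that $r<{\rm diam}(\Omega_\flat)$ (which, incidentally, you assert but do not prove; it can be extracted from \eqref{f:case3}, \eqref{f:diff-balls} and $r$-isolation, but you would have to write that out) is not enough to exclude this geometry by soft arguments.

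The idea you are missing is the one the paper's proof is built on. Assume the infimum vanishes along sequences $x_{1,n},x_{2,n}\in\partial^*\Omega_\flat$ converging to a common point. Since the full set $\Omega$ is not $r$-degenerate and $\Omega=\Omega^{\nu,s}\sqcup\Omega^{\nu,ns}$, the vanishing of the $\Omega^{\nu,s}$-ratio forces the $\Omega^{\nu,ns}$-ratio to stay bounded below; in particular $B_r(x_{1,1})\Delta B_r(x_{2,1})$ contains a small ball $B_\varepsilon(p)$ carrying a fixed fraction of density-one points of $\Omega^{\nu,ns}$. The heart of the proof is then to show this is impossible: by \eqref{f:cc} the sets $\partial^*\Omega_\flat\cap(H_T^{\pm}\setminus H_T)$ are connected, so one can join $x_{i,n}$ to $x_{i,n+1}$ by continuous arcs inside $\partial^*\Omega_\flat$, and \eqref{f:diff-balls} guarantees that as the centre of the ball of radius $r$ moves along these arcs its boundary never crosses a density-one point of $\Omega^{\nu,ns}$. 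Hence the chunk $B_\varepsilon(p)\cap\Omega^{\nu,ns}$ remains trapped inside $B_r(x_{1,n})\Delta B_r(x_{2,n})$ for all $n$, whose measure tends to zero --- a contradiction. This ``sweeping/trapping'' mechanism, which is what actually rules out the possibility that the mass detected by the nondegeneracy of $\Omega$ is carried by $\Omega^{\nu,ns}$ rather than by $\Omega^{\nu,s}$, is entirely absent from your plan, and without it the lemma does not follow.
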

  \proof
   \bigskip
 Assume by contradiction that
\begin{equation}\label{f:hyp12}
\inf_{x_1,x_2 \in \partial ^* \Om_\flat } \frac{|\Om^{\nu, s} \cap \big (B_r(x_1) \Delta B_r(x_2)\big )|}{\|x_1-x_2\|} =0.
\end{equation}
Then there exist sequences of distinct points $\{x_{1,n}\}, \{x_{2,n}\}  \subset  \partial ^* \Om_\flat$, with
  $\|x_{1,n} -x_{2,n} \|\ra 0$,  such that  
 $$ \frac{|\Om^{\nu, s} \cap \big (B_r(x_{1,n}) \Delta B_r(x_{2,n})\big )|}{\|x_{1,n}-x_{2,n}\|} \ra 0  \,.$$  
   Up to subsequences, we may assume that $\|x_{1,n}-x_{2,n}\|$ converges to $0$ decreasingly,  and that $\{x_{1,n}\}$ and  $\{x_{2,n}\}$ converge to some point $\overline x \in \overline {\partial ^* \Om_\flat }$, which may belong or not to $H _ T$, being as usual $T$ the stopping time
    defined as in Step 2 for the parallel movement with normal $ \nu$. 
   Let us examine the two cases separately.

   In case $\overline x \not \in H _ T$, we may assume without loss of generality that  $\{x_{1,n}\}, \{x_{2,n}\} \subset H _ T ^ +\setminus H _ T$. 
   Recall that, by  \eqref{f:cc}, the set $\partial^ * \Omega _\flat  \cap (H _ T ^+ \setminus H _ T)$ is connected. 
   Hence for every $n \geq 1$ we can join $x_{1,n}$ to $x_{1,n+1}$  by a continuous arc $\gamma _{1,n}(s)$ contained into $\partial ^* \Omega _\flat  \cap (H _ T ^+ \setminus H _ T)$. We can repeat the same  procedure for the second sequence, 
   constructing a family of continuous arcs $\gamma _{2,n}(s)$  joining $x_{2,n}$ to $x_{2,n+1}$ for every $n \geq 1$. 
  
   We look at the boundaries of the balls of radius $r$ whose centre moves along $\gamma _{1,n}(s)$ and $\gamma_ {2, n}(s)$. Clearly these balls tends to superpose in the limit as $n \to + \infty$, since $\|x_{1,n}-x_{2,n}\|$ decreases to $0$. 
   Moreover, we know from
  \eqref{f:diff-balls} that, during the continuous movement of their centre along 
  along $\gamma _{1,n}(s)$ and  $\gamma _{2,n}(s)$, 
the boundary of these balls cannot cross points of density $1$ for $\Omega ^ {\nu,  ns}$. 
 This property will give us the required contradiction.  More precisely, we argue as follows.
 Since $\Omega$ is not $r$-degenerate, \eqref{f:hyp12} implies 
$$\inf_{x_1,x_2 \in \partial ^* \Om_\flat } \frac{|\Om^{\nu, ns}\cap \big (B_r(x_1) \Delta B_r(x_2)\big )|}{\|x_1-x_2\|} >0.$$
 In particular, for $n=1$, we have   
${|\Om^{\nu, ns}\cap \big (B_r(x_{1, 1}) \Delta B_r(x_{2, 1})\big )|} >0$. 
   Hence we can pick a point $p\in {\rm int}(B_r(x_{1, 1}) \Delta B_r(x_{2, 1}))$ of density $1$ for $\Omega ^ { \nu , ns} $, and   
   a radius $\e >0$  sufficiently small so that 
  \begin{equation}\label{f:alto} 
  | B_\varepsilon (p) \cap \Om^{\nu, ns} | \geq \frac{1}{2}  | B_\varepsilon (p)| \,.
  \end{equation}
Possibly reducing $\e$ we can also assume that 
 $B_\varepsilon (p)  \sq  \big (B_r(x_{1, 1}) \Delta B_r(x_{2, 1})\big )$. 
   Recalling that the boundaries of the balls of radius $r$ whose centre moves along  the continuous arcs $\gamma _{1,n}(s)$ and  $\gamma _{2,n}(s)$ cannot meet $\Omega ^ {ns}$, we infer that, for $n$ large,    
   $$
 B_\varepsilon (p) \cap \Omega ^ {\nu, ns} \sq  B_r(x_{1,n}) \Delta B_r(x_{2,n}) \,;$$
   hence, still for $n$ sufficiently large, 
$$|B_\varepsilon (p) \cap \Omega ^ {\nu, ns} | \leq | B_r(x_{1,n}) \Delta B_r(x_{2,n}) | < \frac{1}{4}  | B_\varepsilon (p)| \, , $$
against \eqref{f:alto}.

\smallskip In case $\overline x \in H _ T$, we proceed in the same way, except that  we cannot ensure any more that both  sequences $\{x_{1,n}\}$ and $\{x_{2,n}\}$ belong to the same halfspace $H _ T ^+$ or $H _ T ^-$. 
Thus, when we construct the continuous arcs $\gamma _{1,n}  $  and $\gamma _{ 2,n}$, they may belong indistinctly to $\partial ^* \Omega _\flat  \cap ( H ^ - _ T \setminus H_T)$ or to
$\partial ^* \Omega _\flat \cap ( H ^ + _ T \setminus H_T)$, but  this does not affect the validity of the proof since the contradiction follows as soon as $x_{1,n}$ and $x_{2,n}$ are close enough. \qed

\bigskip

    \begin{lemma}\label{bfm04} Given $\nu \in \mathbb S ^ { d-1}$, let $\Omega _ \flat$ be a  $r$-isolated open connected component  of $\Omega ^ {\nu, s}$. 
  There exists a constant $c_\flat >0$ such that
   \begin{equation}\label{f:ci} 
   | \Om ^{\nu, s} \cap B_r(x) | =c_\flat \qquad \forall x \in \overline{\partial^* \Om _\flat}\,.
   \end{equation} 
   
   \smallskip
Moreover, the constant is the same for any other open connected component   of $\Omega ^ {\nu, s}$  such that the closure of its essential boundary intersects 
$ \overline{\partial^* \Om_\flat}$.  
   \end{lemma}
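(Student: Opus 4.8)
I would prove the Lemma as follows. Introduce the two functions $\varphi(x):=|\Om^{\nu,s}\cap B_r(x)|$ and $\psi(x):=|\Om^{\nu,ns}\cap B_r(x)|$, which are Lipschitz on $\R^d$ because $|\varphi(x)-\varphi(y)|\le|B_r(x)\Delta B_r(y)|$ (and similarly for $\psi$). The first point to establish is the structure of $\overline{\partial^*\Om_\flat}$. Writing $g$ for the continuous profile function given by Proposition \ref{l:reflection}(iv) applied with $\omega=\Om_T$ and $H=H_T$, the analysis carried out in the proof of Step 3 shows that the set $E_\flat:=\overline{\partial^*\Om_\flat}\cap(H_T^+\sm H_T)$ — i.e.\ the set of away contact points lying on $\overline{\partial^*\Om_\flat}$ — is the graph of a continuous map over an open connected subset $C_\flat$ of $H_T$, hence homeomorphic to $C_\flat$; and that $\overline{\partial^*\Om_\flat}\cap(H_T^-\sm H_T)$ is the reflection $E_\flat'$ of $E_\flat$ about $H_T$. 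Moreover, every point of $\overline{\partial^*\Om_\flat}\cap H_T$ belongs to the closure of $E_\flat\cup E_\flat'$: otherwise, using that $\Om_\flat$ is invariant under reflection about $H_T$ (being a connected component of the $H_T$-symmetric set $\Om^{\nu,s}$ which meets both open halfspaces) together with \eqref{f:top}, such a point would lie in ${\rm int}(\Om_\flat^{(1)})$ or ${\rm int}(\Om_\flat^{(0)})$ and so outside $\overline{\partial^*\Om_\flat}$, a contradiction. Hence $E_\flat\cup E_\flat'$ is dense in $\overline{\partial^*\Om_\flat}$; and near each of its points $\Om$ coincides with $\Om_\flat$ up to a Lebesgue-null set (by \eqref{f:claim}, \eqref{f:studiobordi} and the local structure of $\Om^{\nu,s}$), so that $\partial^*\Om_\flat=\partial^*\Om$ there and therefore $\overline{\partial^*\Om_\flat}\sq\overline{\partial^*\Om}$. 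In particular $|\Om\cap B_r(x)|=c$ on $\overline{\partial^*\Om_\flat}$ by \eqref{f:hyp3}, so since $\Om=\Om^{\nu,s}\sqcup\Om^{\nu,ns}$, proving \eqref{f:ci} reduces to proving that $\psi$ is constant on $\overline{\partial^*\Om_\flat}$.

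To obtain this, I would first work on $E_\flat$. By \eqref{f:cc} the set $E_\flat$ is connected, and being homeomorphic to an open subset of $\R^{d-1}$ it is locally path connected, hence path connected. If $x$ moves along a continuous arc contained in $E_\flat$, the sphere $\partial B_r(x)$ cannot cross points of density $1$ for $\Om^{\nu,ns}$ — this is exactly the non-crossing property deduced from \eqref{f:diff-balls} in the proof of Lemma \ref{bfm03} — so $\psi(x)=|\Om^{\nu,ns}\cap B_r(x)|$ is unchanged along the arc. Thus $\psi\equiv d_\flat$ on $E_\flat$, and consequently $\varphi\equiv c-d_\flat=:c_\flat$ on $E_\flat$. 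Since $\Om^{\nu,s}$ is symmetric under reflection about $H_T$ (each of its defining segments $(p,p')$ is), $\varphi$ is reflection-invariant, so $\varphi\equiv c_\flat$ on $E_\flat'$ as well; by density of $E_\flat\cup E_\flat'$ in $\overline{\partial^*\Om_\flat}$ and continuity of $\varphi$, we conclude $\varphi\equiv c_\flat$ on $\overline{\partial^*\Om_\flat}$, which is \eqref{f:ci}. Positivity $c_\flat>0$ is easy: given $x\in\overline{\partial^*\Om_\flat}$, choose $y\in\partial^*\Om_\flat$ with $\|x-y\|<r/2$; as $y$ has positive upper density for $\Om_\flat$, there is $\rho<r/2$ with $|\Om_\flat\cap B_\rho(y)|>0$, and since $B_\rho(y)\sq B_r(x)$ one gets $\varphi(x)\ge|\Om_\flat\cap B_\rho(y)|>0$.

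For the concluding sentence of the Lemma, I would observe that the argument above nowhere used that $\Om_\flat$ is $r$-isolated, so it applies verbatim to any open connected component $\Om_\sharp$ of $\Om^{\nu,s}$, giving $|\Om^{\nu,s}\cap B_r(\cdot)|\equiv c_\sharp$ on $\overline{\partial^*\Om_\sharp}$ for a constant $c_\sharp>0$. If $\overline{\partial^*\Om_\sharp}\cap\overline{\partial^*\Om_\flat}\ne\emptyset$, then evaluating $|\Om^{\nu,s}\cap B_r(\cdot)|$ at a common point of these two closures forces $c_\sharp=c_\flat$.

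The main obstacle, as I see it, is the non-crossing property itself — that as $x$ runs through away contact points the moving sphere $\partial B_r(x)$ never sweeps over $(\Om^{\nu,ns})^{(1)}$ — which is the geometric core and is precisely what \eqref{f:diff-balls} is built to provide; everything else (the precise description of $\overline{\partial^*\Om_\flat}$ near $H_T$, density of its off-$H_T$ part, path connectedness of $E_\flat$, Lipschitz continuity of $\varphi$ and $\psi$) is routine bookkeeping once Proposition \ref{l:reflection}, \eqref{f:cc}, \eqref{f:claim} and \eqref{f:studiobordi} are available.
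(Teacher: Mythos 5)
Your proposal is correct and follows essentially the same route as the paper: it uses the connectedness of $\overline{\partial^*\Om_\flat}\cap(H_T^\pm\setminus H_T)$ from \eqref{f:cc} to join boundary points by continuous arcs, the non-crossing property derived from \eqref{f:diff-balls} to conclude that $B_r(x_1)\Delta B_r(x_2)$ carries no mass of $\Om^{\nu,ns}$, and then $r$-criticality to equate the values of $|\Om^{\nu,s}\cap B_r(\cdot)|$, matching the constants across $H_T$ by a limiting argument. Your extra bookkeeping (Lipschitz continuity of $\varphi$, density of the off-$H_T$ boundary, the explicit check that $\overline{\partial^*\Om_\flat}\subseteq\overline{\partial^*\Om}$, and positivity of $c_\flat$) only makes explicit what the paper leaves implicit.
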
 
   
\proof We argue in a similar way as in the proof of the previous lemma. 
Given  $
   x_1, x_2 \in \overline { \partial^* \Om_\flat} \cap (H_T^+\setminus H _ T) $,  
    by  \eqref{f:cc}, they can be joined by a continuous arc $\gamma(s)$ contained into $\partial ^* \Omega _\flat \cap (H _ T ^+ \setminus H _ T)$.      
By  \eqref{f:diff-balls}, the boundary of the ball of radius $r$ centred at any  point along $\gamma (s)$
    cannot cross points of density $1$ for $\Omega ^ {\nu,  ns}$. 
   We deduce that $B _ r (x_1) \Delta B _ r (x_2)$ cannot contain points of density $1$ for $\Omega ^ {\nu,  ns}$.    
      Since $\Omega$ is $r$-critical, it follows that   $| \Om^{\nu, s} \cap B_r(x_1)|=|\Omega^{\nu, s}  \cap B_r(x_2)| $. 
      By the arbitrariness of $x_1$, $x_2$, we infer that   there exists a   constant $c_\flat ^+>0$ such that
   $ | \Om^{\nu, s} \cap B_r(x) | =c_\flat ^+$ for every $x \in \overline { \partial^* \Om_\flat} \cap( H^+_T \sm H_T)$. 
In the same way, we obtain that there exists a   constant $c_\flat ^->0$ such that
   $ | \Om ^{\nu, s} \cap B_r(x) | =c_\flat ^-$ for every $x \in \overline { \partial^* \Om_\flat} \cap( H^-_T  \sm H_T)$. 
 Since the two sets $\overline { \partial^* \Om _\flat } \cap H^{\pm}_T  $ 
  have common points on $H_T$, we conclude that $c_\flat ^ + = c_ \flat ^-$, 
 The same argument proves also the last assertion of the lemma. 
   \qed

\bigskip \begin{lemma}\label{l:remove} 
Given $\nu \in \mathbb S ^ { d-1}$, let $\Omega _ \flat$ be a  $r$-isolated open connected component  of $\Omega ^ {\nu, s}$. 
  Then $\Omega _\flat$  is  and $r$-critical and not $r$-degenerate. The same assertions hold true for its complement $\Omega \setminus 
\Omega _\flat$,  unless it is of measure zero. 
\end{lemma}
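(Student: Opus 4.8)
{\it Plan of the proof.} The statement collects the outputs of Lemmas~\ref{bfm03} and~\ref{bfm04} and upgrades them by means of the $r$-isolation of $\Omega_\flat$. Lemma~\ref{bfm04} provides a constant $c_\flat>0$ with $|\Omega^{\nu, s}\cap B_r(x)|=c_\flat$ for all $x\in\overline{\partial^*\Omega_\flat}$, while Lemma~\ref{bfm03} provides $\alpha:=\inf\{\,|\Omega^{\nu, s}\cap(B_r(x_1)\Delta B_r(x_2))|/\|x_1-x_2\|\ :\ x_1,x_2\in\partial^*\Omega_\flat\,\}>0$; the task is to read these with $\Omega_\flat$ in place of $\Omega^{\nu, s}$ and, for the non-degeneracy of the complement, with $\Omega_\flat,\,\Omega\setminus\Omega_\flat$ in place of $\Omega^{\nu, s},\,\Omega$. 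I would first record the elementary facts to be used repeatedly: $\Omega^{\nu, s}$, and hence each of its connected components $\Omega_\flat$, $\Omega_j^{s}$, is Steiner symmetric about $H_T$, so that for $E$ any of $\Omega^{\nu, s},\Omega_\flat,\Omega_j^{s}$ the quantity $|E\cap B_r(x)|$ takes equal values at $x$ and at its $H_T$-reflection; $\overline{\partial^*\Omega_\flat}\cap(H_T^+\setminus H_T)$ consists of away contact points of $\Omega_\flat$ and is path-connected, by \eqref{f:cc}; $\overline{\partial^*\Omega_\flat}\subseteq\overline{\partial^*\Omega}$; and $\overline{\partial^*(\Omega\setminus\Omega_\flat)}\subseteq\overline{\partial^*\Omega}\cup\overline{\partial^*\Omega_\flat}$, since the density of $\Omega\setminus\Omega_\flat$ is the difference of those of $\Omega$ and $\Omega_\flat$.

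{\it $\Omega_\flat$ is $r$-critical and not $r$-degenerate.} Write $\Omega^{\nu, s}=\Omega_\flat\sqcup\bigsqcup_{j}\Omega_j^{s}$. The decisive observation is that $r$-isolation furnishes, for each $\Omega_j^{s}$, the exact analogue of \eqref{f:diff-balls}: at an away contact point $p'\in\overline{\partial^*\Omega_\flat}\cap(H_T^+\setminus H_T)$ with symmetric $p$, neither $B_r(p')\setminus B_r(p)$ nor --- by symmetry about $H_T$ --- $B_r(p)\setminus B_r(p')$ contains a point of density $1$ for any $\Omega_j^{s}$. Feeding this, together with \eqref{f:diff-balls} for $\Omega^{\nu, ns}$, into the arc argument of the proof of Lemma~\ref{bfm04} --- joining two points of $\overline{\partial^*\Omega_\flat}$ by a continuous arc inside $\partial^*\Omega_\flat\cap(H_T^+\setminus H_T)$, following the boundaries of the balls of radius $r$ centred along it, and using the reflection symmetry to treat the two sides of $H_T$ and points on $H_T$ --- I obtain: for all $x_1,x_2\in\overline{\partial^*\Omega_\flat}$ the set $B_r(x_1)\Delta B_r(x_2)$ contains no point of density $1$ for $\Omega^{\nu, ns}$ nor for any $\Omega_j^{s}$. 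Consequently each $x\mapsto|\Omega_j^{s}\cap B_r(x)|$ is constant on $\overline{\partial^*\Omega_\flat}$, so $|\Omega_\flat\cap B_r(x)|=c_\flat-\sum_j|\Omega_j^{s}\cap B_r(x)|$ is a constant $c_\flat'$ there, which is $r$-criticality; and for $x_1,x_2\in\partial^*\Omega_\flat$ one gets $|\Omega_\flat\cap(B_r(x_1)\Delta B_r(x_2))|=|\Omega^{\nu, s}\cap(B_r(x_1)\Delta B_r(x_2))|\ge\alpha\|x_1-x_2\|$, so $\Omega_\flat$ is not $r$-degenerate.

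{\it The complement $\Omega\setminus\Omega_\flat$.} Assume $|\Omega\setminus\Omega_\flat|>0$. Everything reduces here to the single geometric fact
\[
{\rm dist}\big(\overline{\partial^*(\Omega\setminus\Omega_\flat)}\,,\ \Omega_\flat\big)\ \ge\ r .
\]
Granting it: since points of $\overline{\partial^*\Omega_\flat}$ lie at distance $0$ from $\Omega_\flat$, the bound forces $\overline{\partial^*(\Omega\setminus\Omega_\flat)}\cap\overline{\partial^*\Omega_\flat}=\emptyset$, whence $\overline{\partial^*(\Omega\setminus\Omega_\flat)}\subseteq\overline{\partial^*\Omega}$; moreover $B_r(x)\cap\Omega_\flat=\emptyset$ for every $x\in\overline{\partial^*(\Omega\setminus\Omega_\flat)}$, so $|(\Omega\setminus\Omega_\flat)\cap B_r(x)|=|\Omega\cap B_r(x)|=c$, which is $r$-criticality, and likewise $B_r(x_1)\Delta B_r(x_2)$ is disjoint from $\Omega_\flat$ for $x_1,x_2\in\partial^*(\Omega\setminus\Omega_\flat)$, so the non-degeneracy of $\Omega$ transfers verbatim to $\Omega\setminus\Omega_\flat$. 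To establish the displayed bound I would sweep the boundary $\partial B_r(\cdot)$ of the rolling ball over $\overline{\partial^*\Omega_\flat}$ and combine \eqref{f:diff-balls} with $r$-isolation to show that the open $r$-collar $\{\,{\rm dist}(\cdot,\Omega_\flat)<r\,\}$ of $\Omega_\flat$ contains no point of density $1$ for $\Omega\setminus\Omega_\flat$ --- i.e. no other portion of $\Omega$ enters the $r$-neighbourhood of $\Omega_\flat$ --- and then invoke the decomposition \eqref{f:decompose} to pass from this to the distance statement.

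{\it The main obstacle.} The crux is precisely the last displayed bound: converting the {\it local} $r$-isolation hypothesis --- phrased through away contact points and symmetric differences of two balls --- into the {\it global} assertion that no other portion of $\Omega$ comes within distance $r$ of $\Omega_\flat$. This is exactly where the full strength of $r$-isolation (rather than a weaker ``extractability'') is needed, and it is this mechanism that will eventually produce, in Theorem~\ref{t:serrin3}, the threshold $R>r/2$ on the radii and the threshold $r$ on the mutual distances.
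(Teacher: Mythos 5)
Your proposal is correct and follows essentially the same route as the paper: non-degeneracy and $r$-criticality of $\Omega_\flat$ are read off from Lemmas \ref{bfm03} and \ref{bfm04} via the $r$-isolation hypothesis, and both properties for $\Omega\setminus\Omega_\flat$ are reduced to the fact that $\Omega_\flat$ and $\Omega\setminus\Omega_\flat$ lie at mutual distance at least $r$, which is obtained from \eqref{f:diff-balls} combined with $r$-isolation. If anything, you are more explicit than the paper on the two delicate points, namely the constancy of $|\Omega_j^{s}\cap B_r(x)|$ along $\overline{\partial^*\Omega_\flat}$ (rather than its vanishing) and the $r$-separation of the complement, which the paper merely asserts with the same justification you sketch.
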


\proof   The fact that $\Omega _ \flat$ is not $r$-degenerate    follows from Lemma \ref{bfm03} and the assumption that $\Omega_\flat$ is $r$-isolated. From equality \eqref{f:ci} in Lemma \ref{bfm04},  and the assumption  that $\Omega_\flat$ is $r$-isolated, we infer that 
 there exists a positive constant $c_\flat$ such that 
$   | \Om_\flat \cap B_r(x) | =c_\flat$ for every  $x \in \overline{\partial^* \Om_\flat}$, namely $\Omega _\flat$ is $r$-critical.  
%
Let us now consider the complement  $\Omega \setminus 
\Omega _\flat$.  Assume it is of positive measure, and hence that $\partial ^* ( \Omega \setminus 
\Omega _\flat )$ is not empty.  
The fact that  $\Omega \setminus \Omega _\flat$ is not $r$-degenerate follows from  the assumption that $\Omega$ itself it is not, combined with the fact that points of $\Omega _ \flat$ and  of $\Omega \setminus \Omega _\flat $  cannot lie at distance smaller than $r$ (again by \eqref{f:diff-balls} and the assumption  that $\Omega_\flat$ is $r$-isolated). Finally, 
it holds  
$| (\Om \setminus  \Om_\flat) \cap B_r(x) | =c  $ for every $x \in \overline{\partial^* (\Om \setminus  \Om_\flat)}$, namely $\Om \setminus \Om _\flat$ is $r$-critical. \qed 
\bigskip

\medskip

\medskip
{\it Proof of Claim 4b.} As a preliminary remark, we observe that the family $\mathcal F$ is at most countable. 
This is an immediate consequence  of the fact that any open set of $\R ^d$ has at most countable connected components, 
and of the fact that, for two different directions $\nu_1$ and $\nu_ 2$, 
it is not possible that a connected component of  $\Om^{\nu _1, s}$ intersects a connected component of  $\Om ^ { \nu _2, s} $ without being equal.

\smallskip
We now prove Claim 4b. by contradiction.

First of all let us show that,  if the family $\mathcal F$ is not empty, it contains an element $\Omega  _ \sharp $ which is Steiner symmetric about $d$ hyperplanes 
with linearly independent normals $\nu _1, \dots, \nu _ d$.

Indeed, let $\mathcal S_k$ denote the family of linear subspaces of dimension $k $ in $\R^d$. For every $k = d-2, d-3, \dots 1$, we are going to associate with a given subspace
$V \in  \mathcal S _k$  an element of $\mathcal F$, which will be denoted by $\Omega ^ k _ V$. These mappings
\begin{equation}\label{f:asso}
\mathcal S _k  \ni V\  \longrightarrow \ \Omega ^ k _V \in \mathcal F 
\end{equation}
are constructed as follows. 

For $k = d-2$, given $V \in \mathcal S_{d-2}$,  we consider all the subspaces $ \widetilde V \in \mathcal S_{d-1}$ containing $V$. 
For every such $ \widetilde  V$, denoting by $\widetilde \nu$ the normal direction to $\widetilde V$, we perform the decomposition $\Om^ { \widetilde \nu, s }  \cup \Om ^ { \widetilde \nu, ns }  $. Since $\mathcal F$ is at most countable, there exist two distinct subspaces $ \widetilde  V_1$ and $  \widetilde V_2$ in $\mathcal S _ {d-1}$ 
such that the corresponding symmetric parts $\Om^ { \widetilde \nu _1, s }$ and $\Om^ { \widetilde \nu _2, s }$  share some 
open connected component. 
We pick one among such shared connected components and we associate it with $V$, denoting it by $\Om_V^{d-2}$. 
Notice that neither the spaces $ \widetilde  V_1$, $ \widetilde  V_2$ nor the shared connected component are unique, so the  definition is made by choice.

For $k = d -3$, given $V \in \mathcal S_{d-3}$,  we consider all subspaces $  \widetilde  V \in \mathcal S_{d-2}$ containing $V$. 
Since the image of the mapping in \eqref{f:asso}  previously defined for $k = d-2$ is at most countable, there exist  two distinct subspaces
 $  \widetilde  V _1$ and $  \widetilde  V_2$ in $\mathcal S _ {d-2}$ such that $\Omega ^ {d-2} _{  \widetilde  V_1} =  \Omega ^ {d-2} _{ \widetilde  V _2}$.  We set (again by choice)
  $$\Omega ^ {d-3} _ V:= \Omega ^ {d-2} _{\widetilde V_1} =  \Omega ^ {d-2} _{\widetilde V_2} \,.$$ 
 
We continue the process until we define the  map in \eqref{f:asso} for $k = 1$. 
Arguing as above,  we find two distinct
$\widetilde V_1$ and $\widetilde V_2$ in $\mathcal S _ 1$ such that $\Om_{\widetilde V_1}^1=\Om_{\widetilde V_2}^1$. 
We set 
  $$\Omega _\sharp:= \Omega ^ {1} _{\widetilde V_1} =  \Omega ^ {1} _{\widetilde V_2} \,.$$ 
By construction $\Om_\sharp$ is Steiner symmetric with respect to $d$  hyperplanes with independent normals $\nu_1, \dots, \nu _ d$.  

\smallskip Next we consider 
any other element $\Omega _ {\sharp \sharp}$  of $\mathcal F$ which is in $r$-contact with $\Omega _\sharp$ in the decomposition with respect to one among the directions $\nu_1 , \dots, \nu _d$,  say $\nu _ 1$. 
If $T_1$ is the stopping time for the parallel movement with normal $\nu _ 1$, 
there exist $p, p' \in \overline{\partial ^* \Om_\sharp} \setminus H_{T_1} $, symmetric about $H _{T_1}$,  such that
$$
\big |\big ( B_r(p)\Delta B_r(p')\big ) \cap  \Om_{\sharp\sharp}  \big |>0.
$$ 
Since we are assuming that $\Om_{\sharp\sharp} $ is   Steiner symmetric with respect to $H_{T_1}$, the above inequality implies that 
$\partial B _ r (p)$ contains points of density $1$ for $\Om_{\sharp\sharp} $. 
In particular, this implies that 
$\Omega _ {\sharp \sharp}$  is itself Steiner symmetric about the same hyperplanes as $\Omega _\sharp$ is.

\smallskip
Then, Lemma \ref{bfm05} below  implies that the set $\Om_\sharp \cup \Om_{\sharp\sharp}$ is connected, yielding a contradiction.

\begin{lemma}\label{bfm05}    
Assume that $\omega \sq \R^d$ is  a bounded open set, Steiner symmetric about $d$ hyperplanes  whose normals  are linearly independent. Then $\omega$ is a connected set containing its centre of mass.
   \end{lemma}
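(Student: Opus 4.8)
The plan is to normalise the configuration, then establish connectedness by a ``staircase'' built out of the $d$ symmetries, and finally --- the delicate point --- to upgrade the accumulation at the common point of the hyperplanes into genuine membership. Throughout I use that, being Steiner symmetric about a hyperplane $H$ with normal $\nu$, the set $\omega$ is invariant under the orthogonal reflection $R_H$ across $H$ (the sections in direction $\nu$ are symmetric intervals centred on $H$).

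\textbf{Step 1 (normalisation and centre of mass).} Write $H_i=\{x:\langle x,\nu_i\rangle=c_i\}$. Since $\nu_1,\dots,\nu_d$ are linearly independent, the system $\langle x,\nu_i\rangle=c_i$ has a unique solution, so $H_1\cap\dots\cap H_d=\{x_0\}$ is a single point. Because $R_{H_i}(\omega)=\omega$, the centre of mass $b=\tfrac1{|\omega|}\int_\omega x\,dx$ satisfies $R_{H_i}(b)=b$, hence $b\in H_i$ for every $i$; therefore $b=x_0$. Translating, we may assume $x_0=b=0$, so that $H_i=\nu_i^{\perp}$ for every $i$. It remains to show that $\omega$ is connected and that $0\in\omega$.

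\textbf{Step 2 (connectedness).} Fix $p\in\omega$, put $p^{(0)}=p$, and inductively set $p^{(k+1)}=P_{i(k)}\,p^{(k)}$, where $i(k)\equiv k+1\ (\mathrm{mod}\ d)$ and $P_j$ is the orthogonal projection of $\mathbb R^d$ onto $H_j=\nu_j^{\perp}$. Since $\omega$ is Steiner symmetric about $H_{i(k)}$, the section of $\omega$ through $p^{(k)}$ in direction $\nu_{i(k)}$ is a non-empty open interval symmetric about $H_{i(k)}$; containing $p^{(k)}$, it contains its midpoint $P_{i(k)}p^{(k)}=p^{(k+1)}$ together with the segment $[p^{(k)},p^{(k+1)}]$. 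Hence $p^{(k)}\in\omega$ and $[p^{(k)},p^{(k+1)}]\subset\omega$ for all $k$. Moreover $\|p^{(k)}\|$ is non-increasing (orthogonal projections are non-expansive), so it converges to some $L\ge0$; if $L>0$, take a subsequential limit $q$ of $(p^{(k)})$, extract a further subsequence along which $i(k)$ is constant and then run once through the whole cycle, obtaining $\|P_jq\|=\|q\|=L$ for every $j$; this forces $q\in\bigcap_jH_j=\{0\}$, contradicting $\|q\|=L>0$. Therefore $p^{(k)}\to0$. Granted the next step, every $p\in\omega$ is then joined to $0$ inside $\omega$ by concatenating the segments $[p^{(k)},p^{(k+1)}]$ and finishing inside a small ball about $0$ contained in $\omega$; so $\omega$ is path-connected.

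\textbf{Step 3 ($0\in\omega$: the main obstacle).} From Step 2 we get $0\in\overline\omega$, and $0\notin\operatorname{int}(\omega^{c})$ (a ball about $0$ avoiding $\omega$ would contradict $p^{(k)}\in\omega\to0$); the whole difficulty is to exclude $0\in\partial\omega$. Here one genuinely needs all $d$ symmetries at once --- one alone does not suffice, as the union of two triangles meeting only at the origin shows. The route I would take is to prove the stronger assertion that $\omega$ is star-shaped with respect to $0$, so that in particular $0\in\omega$ and connectedness follows as well. Concretely one argues by contradiction: if for some $p\in\omega$ the radial segment fails to reach $0$, let $t^{*}\in(0,1)$ be the infimum of the $t$ with $[t,1]\,p\subset\omega$; then $t^{*}p\in\partial\omega$, and examining the $d$ axial sections of $\omega$ through $t^{*}p$ and through the staircase points generated from $t^{*}p$ --- and using that $0$ is both the common point $\bigcap_iH_i$ and the centre of mass --- one is led to a section in some direction $\nu_j$ that is not a symmetric interval about $H_j$, a contradiction. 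I expect carrying this last step out rigorously to be the bulk of the work; the remaining claim, that $\omega$ contains its centre of mass, was already recorded in Step 1 since that centre of mass is $0$.
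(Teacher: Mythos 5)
Your Steps 1 and 2 are correct and coincide with the paper's argument: the centre of mass is the common point of the $d$ hyperplanes, the iterated orthogonal projections $p^{(k)}$ stay in $\omega$ together with the connecting segments, and $p^{(k)}\to 0$. The problem is Step 3, which you yourself flag as ``the bulk of the work'' and then do not carry out. As it stands you have only shown $0\in\overline\omega$, and both the membership $0\in\omega$ and the connectedness of $\omega$ (which in your scheme needs a small ball about $0$ inside $\omega$ to finish the path) rest on an unexecuted sketch. The proposed route --- proving star-shapedness about $0$ by locating a boundary point $t^*p$ and deriving that some section fails to be a symmetric interval --- is not an argument yet: you do not say which section, nor why symmetry of sections through a single boundary point (or through its staircase iterates) is violated, and it is not even clear that star-shapedness holds for non-orthogonal families of hyperplanes. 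So there is a genuine gap exactly at the point you identify as the crux.

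The missing idea is simpler than star-shapedness and is what the paper uses: propagate a ball of \emph{fixed} radius along the staircase. Since $\omega$ is open, $B_\vps(p^{(0)})\subseteq\omega$ for some $\vps>0$. If $B_\vps(p^{(k)})\subseteq\omega$, then for each $z\in H_{i(k)}$ with $\|z-p^{(k+1)}\|<\vps$ the section of $\omega$ over $z$ in direction $\nu_{i(k)}$ has length at least that of the corresponding section of $B_\vps(p^{(k)})$, namely $2(\vps^2-\|z-p^{(k+1)}\|^2)^{1/2}$; being a symmetric interval about $H_{i(k)}$, it therefore contains the corresponding section of $B_\vps(p^{(k+1)})$. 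Hence $B_\vps(p^{(k+1)})\subseteq\omega$ with the \emph{same} $\vps$ (and likewise the whole tube $[p^{(k)},p^{(k+1)}]\oplus B_\vps(0)$ lies in $\omega$). Letting $k\to\infty$ gives $B_{\vps/2}(0)\subseteq\omega$, which yields both $0\in\omega$ and, via the tubes, a path in $\omega$ from any starting point to $0$. Without this (or an equivalent) quantitative step, the accumulation of the $p^{(k)}$ at the origin cannot be upgraded to membership, and the lemma is not proved.
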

   
    \begin{proof}
   
Let us denote the hyperplanes    by $H _ 1, \dots, H _ d$, and by $ \Pi_{H_k} (x)$  the orthogonal projection from $\R ^d$ onto $H_k$ , for $k = 1, \dots, d$. Starting from a fixed point $x_0 \in \omega$, let us consider 
   the sequence of points defined by  
  $
   x_n := \Pi_{H_k} ( x_{n-1})$  if  $n = k \ [\mbox{mod }d]$. 
    It is easy to check that $\{x _n\}$  converges to the centre of mass $G$ of $\omega$.  In fact, 
let us assume  without losing generality that $G$ is at the origin.
     If   $\alpha_n\in (0, \frac \pi2]$ is the angle between  the normals to  the hyperplanes $H_n$ and $H_{n+1} $  (obtained by cyclically repeating $H _ 1, \dots, H _ d$),  and $d_n$ is the distance of $x_n$ to $H_n \cap H_{n+1}$, we have that
   $\|x_{n+1} \|^2 = \|x_n\|^2-d_n^2\,  \sin^2(\alpha_n)$. 
   Then $ \|x_n\|^2$ is decreasing and  $\{d_n\}$ converges to $0$, since $\{\alpha_n\}$ is a periodic sequence of strictly positive numbers. This readily implies  that ${\rm dist} (x_n, H_k)\to 0$, for every $k=1, \dots, d$, and hence $x_n  \ra 0$.
   
Next we observe that, since $\omega$ is open, there exists $\vps>0$ such that $B_\vps(x_0) \subseteq \omega$. 
   From the assumption that $\omega$ is Steiner symmetric about $H _1, \dots H _ d$, we get that $B_\vps(x_n) \sq \omega$ for every $n$ and, more in general, that 
  \begin{equation}\label{f:tubi} \bigcup_{n\ge 1} ([x_{n-1},x_{n}] \oplus B_\vps(0)) \subseteq \omega.
  \end{equation}

  By \eqref{f:tubi}, it turns out that $B _ \e ( G)$ is contained into $\omega$. Moreover,  $\omega$ is connected because the initial point $x_0$ was arbitrarily chosen, and by \eqref{f:tubi} it can be joined to $G$ by a continuous path contained into $\omega$.  
         \end{proof}

\medskip
{\it Proof of Claim 4c.} 
 We start the procedure by choosing a direction $\nu \in \mathbb S ^ { d-1}$. 
By Claim 4b., we can pick a $r$-isolated open connected component of $\Omega ^ {\nu, s}$, which  by Claim 4a. turns out to be a ball of radius $R_1> r/2$. We remove this ball from $\Omega$. 
By Claim 4a.,  
we are left with a set $\Omega'$ which is still $r$-critical and  not $r$-degenerate  (unless it has measure zero).  So we can restart the process
with  $\Omega '$ in place of $\Omega$. 
Again, by Claim 4b., we can pick a $r$-isolated open connected component of $(\Omega  ') ^ {\nu, s} $, which  by Claim 4a. turns out to be a ball of radius $R_2> r/2$. We remove this ball from $ \Omega'$.  We observe that, 
 since the two balls of radii $R_1$ and $R_2$ that we have extracted from $\Omega$ are are $r$-isolated and $r$-critical, necessarily $R_ 1 = R _ 2 =: R$,  and the balls lie at distance larger than or equal to $r$ from each other.   
Since $\Omega$ has finite measure, we can repeat this process a finite number of times, until when
we are left with a set of measure zero.   \qed

  \bigskip

\begin{remark}\label{rem:h}   A technical extension of  Theorem \ref{t:serrin3} is expected to hold 
when the kernel  $\chi_{B_r(0)}$  is
replaced by a  radially symmetric, decreasing, non negative  function $h$ satisfying suitable assumptions:
any set $\Omega$ with finite measure satisfying the  criticality and  nondegeneracy    conditions,  meant as
$$\int _{\Omega}  h ( x -y) dy = c \quad \forall x \in \partial ^* \Omega \quad \text{ and } \quad
\inf _{x_1, x_2 \in \partial ^* \Omega} \frac{ \int _\Omega | h  (x_1- y)-h (x_2- y) |\, dy\, ,   
}{\| x_1 -x_2 \| }  >0 \,,$$
will be a finite union of balls or a single ball, depending on the structure of the level sets of $h$. 
\end{remark}

\medskip
\noindent 
{\bf Acknowledgments.} The authors are thankful to Gabriele Bianchi, Andrea Colesanti, Jacques-Olivier Lachaud, Rolando Magnanini, Michele Marini,  Micka\"{e}l Nahon,  Berardo Ruffini and Shigeru Sakaguchi for fruitful conversations and comments. 
 The first author  was supported by ANR SHAPO (ANR-18-CE40-0013).


\end{document}